\documentclass[12pt, letterpaper]{amsart}

\oddsidemargin0.25in
\evensidemargin0.25in
\textwidth6.00in
\topmargin0.00in
\textheight8.50in

\newcommand{\indentalign}{\hspace{0.3in}&\hspace{-0.3in}}

\renewcommand{\Re}{\operatorname{Re}}
\renewcommand{\Im}{\operatorname{Im}}

\newcommand{\defeq}{\stackrel{\rm{def}}{=}}

\newcommand{\sgn}{\operatorname{sgn}}

\newcommand{\bear}{\begin{eqnarray}}
\newcommand{\eear}{\end{eqnarray}}
\newcommand{\beeq}{\begin{equation}}
\newcommand{\eneq}{\end{equation}}

\def\bm{\begin{bmatrix}}
\def\endm{\end{bmatrix}}

\def\de{\delta}

\let\phi\varphi

\usepackage{hyperref}
\hypersetup{
colorlinks=true,
linkcolor=black,
citecolor=black,
urlcolor=blue,
linktoc=page,
pdfstartview=FitH
}

\allowdisplaybreaks

\usepackage{amssymb}
\usepackage{amsthm}
\usepackage{amsxtra}
\usepackage{graphicx}

\newtheorem{theorem}{Theorem}

\newtheorem{lemma}[theorem]{Lemma}

\theoremstyle{remark}

\numberwithin{equation}{section}

\numberwithin{theorem}{section}

\numberwithin{table}{section}

\numberwithin{figure}{section}

\ifx\pdfoutput\undefined
  \DeclareGraphicsExtensions{.pstex, .eps}
\else
  \ifx\pdfoutput\relax
    \DeclareGraphicsExtensions{.pstex, .eps}
  \else
    \ifnum\pdfoutput>0
      \DeclareGraphicsExtensions{.pdf}
    \else
      \DeclareGraphicsExtensions{.pstex, .eps}
    \fi
  \fi
\fi

\title[Blow-up for 1D NLS with point nonlinearity II]{Blow-up for the 1D nonlinear Schr\"odinger equation with point nonlinearity II: \\ Supercritical blow-up profiles}

\author{Justin Holmer}
\author{Chang Liu}
\address{Brown University}

\begin{document}

\maketitle

\begin{abstract}
We consider the 1D nonlinear Schr\"odinger equation (NLS) with focusing \emph{point nonlinearity},
\begin{equation}
\label{E:intro-1}
i\partial_t\psi + \partial_x^2\psi + \delta|\psi|^{p-1}\psi = 0,
\end{equation}
where $\de=\de(x)$ is the delta function supported at the origin.   In the $L^2$ supercritical setting $p>3$, we construct self-similar blow-up solutions belonging to the energy space $L_x^\infty \cap \dot H_x^1$.  This is reduced to finding outgoing solutions of a certain stationary profile equation.  All outgoing solutions to the profile equation are obtained by using parabolic Weber functions and solving the jump condition at $x=0$ imposed by the $\delta$ term in \eqref{E:intro-1}.   This jump condition is an algebraic condition involving gamma functions, and existence and uniqueness of solutions is obtained using the intermediate value theorem and formulae for the digamma function.  We also compute the form of these outgoing solutions in the slightly supercritical case $0<p-3 \ll 1$ using the log Binet formula for the gamma function, and contour deformation and stationary phase/Laplace method in the integral formulae for the parabolic Weber functions.
\end{abstract}

\section{Introduction}
\label{S:introduction}

We consider the 1D nonlinear Schr\"odinger equation (NLS) with $p$-power focusing \emph{point nonlinearity}, for $p>1$,
\begin{equation}
\label{E:pNLS}
i\partial_t \psi + \partial_x^2 \psi + \delta |\psi|^{p-1}\psi =0
\end{equation}
where $\de=\de(x)$ is the delta function supported at the origin, and $\psi(x,t)$ is a complex-valued wave function for $x\in \mathbb{R}$.  The equation \eqref{E:pNLS} can be interpreted as the free linear Schr\"odinger equation
$$i\partial_t \psi + \partial_x^2 \psi =0 \,, \quad \text{for} \quad x\neq 0$$
together with the jump conditions at $x=0$:\footnote{We define $\psi(0-,t) = \lim_{x\nearrow 0} \psi(x,t)$ and $\psi(0+,t) = \lim_{x\searrow 0} \psi(x,t)$.}  
\begin{equation}
\label{E:jump}
\begin{aligned}
&\psi(0,t) \defeq \psi(0-,t) = \psi(0+,t) \\
&\partial_x \psi(0+,t) - \partial_x\psi (0-,t) = - |\psi(0,t)|^{p-1}\psi(0,t)
\end{aligned}
\end{equation}
The equation \eqref{E:pNLS} satisfies the scaling property
\begin{equation}
\label{E:scaling}
\psi(x,t) \text{ solves }\eqref{E:pNLS} \implies \psi_\lambda(x,t) = \lambda^{1/(p-1)} \psi(\lambda x, \lambda^2 t) \text{ solves }\eqref{E:pNLS}
\end{equation}
The scale invariant Sobolev space $\dot H^{\sigma_c}$, meaning the value $\sigma=\sigma_c$ for which $\| \psi_\lambda \|_{\dot H^\sigma}$ is $\lambda$-indepdendent, is $\sigma_c = \frac12 - \frac{1}{p-1}$, and we say the equation is $\dot H^{\sigma_c}$ critical.  The case $p=3$ is $L^2$ critical, and we say that $p<3$ is $L^2$ subcritical and $p>3$ is $L^2$ supercritical.  

We define the mass and energy of a solution $\psi$ to be
$$M(\psi(t)) = \| \psi(x,t) \|_{L_x^2}^2 \,, \qquad E(\psi(t)) = \frac12 \|\psi_x(x,t)\|_{L_x^2}^2 - \frac{1}{p+1} |\psi(0,t)|^{p+1}$$
By direct calculation, they are conserved, meaning that $M(\psi(t))$ and $E(\psi(t))$ are independent of $t$ whenever they are defined.  It follows from the Gagliardo-Nirenberg inequality
\begin{equation}
\label{E:GN}
\| \psi\|_{L_x^\infty}^2 \leq \| \psi\|_{L_x^2} \|\psi_x\|_{L_x^2}
\end{equation}
that if $p<3$, then all $H^1$ solutions to \eqref{E:pNLS} are global.  $H^1$ blow-up solutions do exist for $p\geq 3$.  In this paper, we seek explicit blow-up solutions for $p>3$ called \emph{self-similar} (since they are, up to a phase modulation, a rescaling of a fixed spatial profile).   It turns out to achieve exact self-similar blow-up solutions as in \eqref{E:self-similar-ansatz}, we have to relax the requirement that our solutions belong to $H_x^1$, and instead merely require that they belong to $L_x^\infty \cap \dot H_x^1$, which we call the \emph{energy space}, since the two terms defining the energy are finite for functions belonging to this space.

\begin{theorem}[structure of $L^2$ supercritical self-similar blow-up solutions]
\label{T:ansatz}
The function
\begin{equation}
\label{E:self-similar-ansatz}
\psi(x,t) = \lambda(t)^{1/(p-1)} e^{i\tau(t)} \eta(\lambda(t) x)
\end{equation}
solves \eqref{E:pNLS}  with $\lim_{t\nearrow T_*} \lambda(t) = +\infty$ if and only if there exists $h>0$ and $\kappa\in \mathbb{R}$ such that 
\begin{equation}
\label{E:lambda}
\lambda(t) = \frac{1}{\sqrt{ 2h (T_*-t)}} \,, \qquad \tau(t) = \frac{\kappa}{2h} \ln \left( \frac{T_*}{T_*-t} \right) + \tau(0) 
\end{equation}
and $\eta(z)$ solves the stationary equation
\begin{equation}
\label{E:blow-up-profile1}
(\kappa + ih \sigma_c) \eta - ih \Lambda_z \eta - \eta_{zz} -\delta |\eta|^{p-1}\eta=0 \,, \quad \Lambda_z = \frac12 + z \partial_z
\end{equation}
\end{theorem}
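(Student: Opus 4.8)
The plan is to substitute the self-similar ansatz \eqref{E:self-similar-ansatz} directly into \eqref{E:pNLS}, treating the free evolution on $\{x\neq 0\}$ and the jump condition \eqref{E:jump} at $x=0$ as two separate requirements, and to show that each requirement corresponds to one piece of the conclusion. Writing $z=\lambda(t)x$ and using the chain rule, I would first compute $\partial_t\psi$ and $\partial_x^2\psi$. The only mildly delicate point in $\partial_t\psi$ is the term $\partial_t[\eta(\lambda x)] = x\lambda'\,\eta'(z)$, which I rewrite as $\tfrac{\lambda'}{\lambda}z\,\eta'(z)$ so that all $x$-dependence is repackaged into $z$. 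Plugging into $i\partial_t\psi+\partial_x^2\psi=0$ (valid for $x\neq 0$), factoring out $\lambda^{1/(p-1)}e^{i\tau}$, and dividing by $\lambda^2$, I obtain an identity of the schematic form
\[
\Big( i\tfrac{\lambda'}{(p-1)\lambda^3} - \tfrac{\tau'}{\lambda^2}\Big)\eta + i\tfrac{\lambda'}{\lambda^3}\,z\eta' + \eta'' = 0 ,
\]
which must hold for all $z\neq 0$ and all $t\in[0,T_*)$.

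The key structural step is separation of variables. Since the $z$-profile $\eta$ is fixed while the coefficients depend only on $t$, for a nontrivial $\eta$ the two ratios $\lambda'/\lambda^3$ and $\tau'/\lambda^2$ must be constants, which I name $h$ and $\kappa$ respectively. Concretely, one evaluates the identity at two values of $z$ at which $\eta$ and $z\eta'$ are linearly independent and solves the resulting $2\times 2$ linear system, whose right-hand side $-\eta''$ is $t$-independent; the degenerate case $z\eta'\propto\eta$, i.e.\ $\eta$ a pure power, is excluded for nontrivial energy-space profiles. With these constants in hand I match coefficients against the target profile equation \eqref{E:blow-up-profile1}, using $\Lambda_z\eta = \tfrac12\eta + z\eta'$ and the definition $\sigma_c=\tfrac12-\tfrac1{p-1}$; after multiplying through by $-1$ the $\tfrac12\eta$ contributions cancel and the residual $\eta$-coefficient reduces to exactly $-\tfrac{ih}{p-1}$, confirming the match on $\{z\neq0\}$.

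Next I integrate the two separated ODEs. The equation $\lambda'=h\lambda^3$ gives $\tfrac{d}{dt}\lambda^{-2}=-2h$, so $\lambda^{-2}=-2ht+\mathrm{const}$; imposing $\lambda(t)\to+\infty$ as $t\nearrow T_*$ forces the constant to be $2hT_*$ and forces $h>0$, yielding $\lambda(t)=[2h(T_*-t)]^{-1/2}$ as in \eqref{E:lambda}. Substituting into $\tau'=\kappa\lambda^2=\kappa/(2h(T_*-t))$ and integrating from $0$ to $t$ produces $\tau(t)=\tfrac{\kappa}{2h}\ln\!\big(\tfrac{T_*}{T_*-t}\big)+\tau(0)$, with $\kappa\in\mathbb R$ automatically since $\tau$ is real. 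Finally I address the jump condition: because $|e^{i\tau}|=1$, one computes $\psi(0,t)=\lambda^{1/(p-1)}e^{i\tau}\eta(0)$ and $\partial_x\psi(0\pm,t)=\lambda^{1/(p-1)+1}e^{i\tau}\eta'(0\pm)$, so both sides of \eqref{E:jump} carry the common factor $\lambda^{1/(p-1)+1}e^{i\tau}$; after cancellation \eqref{E:jump} collapses to the $t$-independent condition $\eta'(0+)-\eta'(0-)=-|\eta(0)|^{p-1}\eta(0)$, which is precisely the distributional jump encoded by the $\delta|\eta|^{p-1}\eta$ term in \eqref{E:blow-up-profile1} (seen by integrating that equation across $z=0$, where the remaining terms are continuous and the $z\eta'$ term vanishes). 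Reading all of these equivalences backward gives the converse implication.

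I expect the main obstacle to be the rigorous justification of the separation-of-variables step, namely arguing that the purely $t$-dependent coefficients are forced to be constant and excluding the degenerate pure-power profiles, together with the careful bookkeeping that identifies the singular $\delta$-nonlinearity with the derivative jump at $z=0$. The remaining computations (chain rule, ODE integration, and the coefficient matching against $\sigma_c$) are routine once the ansatz has been substituted.
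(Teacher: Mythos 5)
Your proposal is correct and takes essentially the same route as the paper: substitute the ansatz, define $h(t)=\lambda^{-3}\lambda_t$ and $\kappa(t)=\lambda^{-2}\tau_t$, force them to be constant by a separation-of-variables argument that excludes pure-power profiles, then integrate $\lambda' = h\lambda^3$ and $\tau'=\kappa\lambda^2$ with $\lambda(t)\to+\infty$ as $t\nearrow T_*$ to get $h>0$ and \eqref{E:lambda}. The only cosmetic differences are that you establish constancy by evaluating at two values of $z$ where $\eta$ and $z\eta'$ are independent (the paper instead subtracts the equation at two times $t_1\neq t_2$, reducing to $zf'=cf$ for $f(z)=z^{1/(p-1)}\eta(z)$, and rules out the pure power by incompatibility with the profile equation rather than by energy-space membership, which also disposes of the constant-profile case your phrasing leaves implicit), and that you carry the $\delta$ term as the jump condition \eqref{E:jump} while the paper keeps it as a distributional term throughout; these are equivalent bookkeeping choices.
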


This is proved in \S\ref{S:ansatz}.  To better understand \eqref{E:blow-up-profile1}, let us drop the relationship between $\sigma_c$ and $p$ (given by $\sigma_c=\frac12-\frac{1}{p-1}$) and just consider the equation for arbitrary $\kappa\in \mathbb{R}$, $h>0$, and $\sigma\in \mathbb{R}$.
\begin{equation}
\label{E:blow-up-profile2}
(\kappa + ih \sigma) \eta - ih \Lambda_z \eta - \eta_{zz} -\delta |\eta|^{p-1}\eta=0
\end{equation}
Now we turn to a study of when \eqref{E:blow-up-profile2} has a solution in the energy space.  Taking $\eta(z) = e^{-\frac14 i z^2 h} \varphi(z)$, then $\eta(z)$ solves \eqref{E:blow-up-profile2} if and only if $\varphi(z)$ solves
\begin{equation}
\label{E:blow-up-profile3}
(\kappa+ih\sigma)\varphi - \varphi_{zz} - \frac14 h^2z^2 \varphi - \delta |\varphi|^{p-1}\varphi=0
\end{equation}

As discussed in \S\ref{S:Weber}, two independent solutions of the eigenvalue problem for the inverted harmonic well (here $\lambda\in \mathbb{C}$ is the spectral parameter and has nothing to do with $\lambda(t)$ in \eqref{E:lambda})
$$-w_{zz} - \frac14 h^2z^2 w = h\lambda w$$
are given by $w(z,\lambda)$, $w^*(z, \lambda)$ defined by \eqref{E:w-wstar} with the asymptotic behavior (see \eqref{E:2.10}, \eqref{E:2.11})
\begin{equation}
\label{E:asymp}
\begin{aligned}
& w(z) \sim (h^{1/2}z)^{i\lambda-\frac12} e^{\frac14 ihz^2} e^{\pi \lambda/4} e^{i\pi/8} \qquad && \text{as }z\to +\infty \\
& w^*(z) \sim  (h^{1/2}z)^{-i\lambda-\frac12} e^{-\frac14 ihz^2} e^{\pi \lambda/4} e^{-i\pi/8} \qquad  && \text{as }z\to +\infty 
\end{aligned}
\end{equation}
Thus, the most general solution $\varphi$ to \eqref{E:blow-up-profile3} is given by
\begin{equation}
\label{E:1-133}
\varphi(z) = 
\begin{cases}
\alpha_+ w(z) + \alpha_+^*w^*(z)  & \text{for }z>0\\
\alpha_- w(-z) + \alpha_-^*w^*(-z) & \text{for }z<0
\end{cases}
\end{equation}
for $\lambda = -\kappa h^{-1} - i\sigma$, where the four complex constants $\alpha_+$, $\alpha_+^*$, $\alpha_-$, $\alpha_-^*$ must be chosen to satisfy \eqref{E:jump}.  We will not address this general problem since we seek a solution $\varphi$ for which $\eta(z) = e^{-\frac14ihz^2} \varphi(z) \in L^\infty_z\cap \dot H^1_z$, which further constrains the problem, as we now explain.  From \eqref{E:asymp},
$$ 
\begin{aligned}
&|\partial_z (e^{-\frac14ihz^2} w(z,\lambda))| = O(|z|^{-\Im \lambda - \frac32}) \\
& |\partial_z( e^{-\frac14ihz^2} w^*(z,\lambda))| = O( |z|^{+\Im \lambda + \frac12})
\end{aligned}
\qquad  \text{as }z\to +\infty$$
and hence
$$\|e^{-\frac14ihz^2} w(z,\lambda) \|_{\dot H^1_{z>0}} < \infty \quad \iff \quad \Im \lambda >-1$$
$$\|e^{-\frac14ihz^2} w^*(z,\lambda) \|_{\dot H^1_{z>0}} < \infty \quad \iff \quad \Im \lambda <-1$$
With $\lambda = -\kappa h^{-1}-i\sigma$, we have $\Im \lambda = -\sigma$, and since we are interested in the case when $\sigma=\sigma_c<\frac12$, we must select solutions with no $w^*(z,\lambda)$ component.  

 A solution $\varphi$ to \eqref{E:blow-up-profile3} is called outgoing if, in \eqref{E:1-133}, $\alpha_+^*=0$ and $\alpha_-^* =0$.  Thus
\begin{equation}
\label{E:1-135}
\varphi(z) = 
\begin{cases}
\alpha_+ w(z)  & \text{for }z>0\\
\alpha_- w(-z)  & \text{for }z<0
\end{cases}
\end{equation}
By the first condition of \eqref{E:jump}, we must have $\alpha_+ = \alpha_-$.  Hence
$$\varphi(z) = \alpha w(|z|) $$
for some $\alpha\in \mathbb{C}$.  The second condition in \eqref{E:jump} becomes
$$2w_z(0,\lambda)= -|w(0,\lambda)|^{p-1}w(0,\lambda) |\alpha|^{p-1}$$
A solution $\alpha$ exists if and only if
\begin{equation}
\label{E:solve-condition}
2h^{1/2}A(\lambda)\defeq - \frac{2w_z(0,\lambda)}{w(0,\lambda)} \text{ is real and positive}
\end{equation}
and in this case, $\alpha$ must satisfy
\begin{equation}
\label{E:alpha}
|\alpha| = \left( - \frac{2w_z(0,\lambda)}{w(0,\lambda)} \right)^{1/(p-1)} \frac{1}{|w(0,\lambda)|}
\end{equation}
 Since \eqref{E:blow-up-profile3} is invariant under multiplication by $e^{i\theta}$ for $\theta\in \mathbb{R}$, the condition \eqref{E:solve-condition} uniquely specifies the solution once a choice of phase is given.  The most convenient choice is to take the phase of $\alpha$ to be that of $1/w(0,\lambda)$, so \eqref{E:alpha} becomes
\begin{equation}
\label{E:alpha2}
\alpha = \left( - \frac{2w_z(0,\lambda)}{w(0,\lambda)} \right)^{1/(p-1)} \frac{1}{w(0,\lambda)}
\end{equation}

Before examining the question of for which $\lambda$ (i.e. which $\kappa, \sigma$) the condition \eqref{E:solve-condition} holds, we can derive some general constraints.  We have established that for $\sigma<1$, $\eta(z)$ is a finite energy solution of \eqref{E:blow-up-profile2}, if and only if $\varphi(z)=e^{\frac14iz^2h}\eta(z)$ is an outgoing solution of \eqref{E:blow-up-profile3}.  Hence for $\sigma<1$, finite energy solutions $\eta(z)$ of \eqref{E:blow-up-profile2} have asymptotic behavior
\begin{equation}
\label{E:eta-asymp}
\eta(z) \sim c_{0,\lambda,h} |z|^{i\lambda -\frac12} \,, \qquad \partial_z\eta(z) \sim  (\sgn z)c_{1,\lambda,h} |z|^{i\lambda - \frac32} \,, \qquad \text{as }|z|\to \infty
\end{equation}
for certain constants $c_{0,\lambda,h}$, $c_{1,\lambda,h}$, with $\lambda = -\kappa h^{-1} - i \sigma$.    

\begin{theorem}[Pohozhaev identities]
\label{T:Pohozhaev}
If $\sigma<1$ and $\eta(z)$ is a nontrivial finite energy solution of \eqref{E:blow-up-profile2}, then $\sigma> 0$ and the following identity holds:
$$0 = (1-\sigma) \int_{-\infty}^{+\infty} |\eta_z|^2 - \left( \frac12-\sigma\right) |\eta(0)|^{p+1}$$
from which we obtain that $E(\eta)=0$ when $\sigma=\sigma_c$.
\end{theorem}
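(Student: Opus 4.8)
The plan is to derive the identity from two integration-by-parts (multiplier) identities for the profile equation \eqref{E:blow-up-profile2}, carried out on the truncated interval $[-R,R]\setminus\{0\}$ and then passed to the limit $R\to\infty$. Away from the origin $\eta$ solves $-\eta_{zz}+(\kappa+ih\sigma)\eta-ih\Lambda_z\eta=0$, and the $\delta$-term enters only through the jump condition \eqref{E:jump}, namely $\eta_z(0+)-\eta_z(0-)=-|\eta(0)|^{p-1}\eta(0)$ with $\eta$ continuous at $0$. Throughout I write $\rho=|\eta|^2$ and $J=\Im(\eta_z\bar\eta)$. The asymptotics \eqref{E:eta-asymp} give $\rho=O(|z|^{2\sigma-1})$, $|\eta_z|^2=O(|z|^{2\sigma-3})$, and $J=O(|z|^{2\sigma-2})$ as $|z|\to\infty$; in particular, for $\sigma<1$ the integral $\int|\eta_z|^2$ converges and the boundary contributions at $\pm R$ involving $|\eta_z|^2$ and $J$ vanish in the limit, whereas for $\sigma>0$ one has $\eta\notin L^2$ and the quantities $\int\rho$, $\int zJ$, and $\bigl[z\rho\bigr]_{-R}^{R}$ each diverge like $R^{2\sigma}$.

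First I would record the local conservation law obtained by multiplying the equation by $\bar\eta$ and taking the imaginary part: since $\Im(\eta_{zz}\bar\eta)=J'$ and $\Re(\Lambda_z\eta\cdot\bar\eta)=\tfrac12\rho+\tfrac z2\rho'$, one finds $(J+\tfrac h2 z\rho)'=h\sigma\rho$. Because the jump in $\eta_z$ is parallel to $\eta(0)$, the current $J$ is continuous at $0$, so $F\defeq J+\tfrac h2 z\rho$ is continuous and monotone (nondecreasing if and only if $\sigma\ge0$). Evaluating the limits of $F$ as $z\to\pm\infty$ via \eqref{E:eta-asymp} (there $J\to0$, while $\tfrac h2 z\rho$ tends to $+\infty$, a positive constant, or $0$ as $z\to+\infty$, and to the negatives of these as $z\to-\infty$, according to whether $\sigma$ is positive, zero, or negative) forces a contradiction with nontriviality unless $\sigma>0$: for $\sigma<0$ monotonicity squeezes $F\equiv0$, and for $\sigma=0$ the constant $F$ would have to take opposite-sign limits at $\pm\infty$. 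This is how I expect to obtain $\sigma>0$.

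For the main identity I would use the two multiplier identities obtained from the equation, namely multiplication by $\bar\eta$ and by $z\bar\eta_z$, in each case integrating over $[-R,R]\setminus\{0\}$ and taking the real part. The $\bar\eta$ identity produces $\int|\eta_z|^2-|\eta(0)|^{p+1}+\kappa\int\rho+h\int zJ=0$ (the $|\eta(0)|^{p+1}$ coming from the jump), while the $z\bar\eta_z$ identity produces $\tfrac12\int|\eta_z|^2+\tfrac\kappa2\int z\rho'+h(\sigma-\tfrac12)\int zJ=0$ (here the $\delta$-term drops out because $z\,\delta=0$, so the jump condition does not enter). The hard part is that the individual terms $\int\rho$, $\int zJ$, $\int z\rho'$ diverge for $\sigma>0$; the key is that their $O(R^{2\sigma})$ leading parts cancel in exactly the combinations appearing above (this is forced by \eqref{E:eta-asymp}), while the finite remainders are tied together by the integrated conservation law, which gives $\tfrac h2\bigl[z\rho\bigr]_{-R}^{R}-h\sigma\int\rho\to0$, equivalently $\int z\rho'\to(2\sigma-1)\int\rho$. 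Substituting this to rewrite $\int z\rho'$ and then eliminating the common finite quantity $\kappa\int\rho+h\int zJ$ between the two identities collapses everything to $(1-\sigma)\int|\eta_z|^2=(\tfrac12-\sigma)|\eta(0)|^{p+1}$. Justifying this divergence cancellation rigorously, rather than manipulating divergent integrals formally, is the principal obstacle, and I would handle it by keeping all computations at finite $R$ and invoking \eqref{E:eta-asymp} to evaluate every boundary term and leading divergence explicitly before letting $R\to\infty$.

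Finally, the consequence $E(\eta)=0$ at $\sigma=\sigma_c$ is a direct algebraic substitution. Solving the main identity for $\int|\eta_z|^2=\tfrac{1/2-\sigma}{1-\sigma}|\eta(0)|^{p+1}$ and inserting into $E(\eta)=\tfrac12\int|\eta_z|^2-\tfrac1{p+1}|\eta(0)|^{p+1}$ gives a multiple of $|\eta(0)|^{p+1}$ with coefficient $\tfrac{1/2-\sigma}{2(1-\sigma)}-\tfrac1{p+1}$; using $\sigma_c=\tfrac12-\tfrac1{p-1}$ one checks $\tfrac12-\sigma_c=\tfrac1{p-1}$ and $1-\sigma_c=\tfrac{p+1}{2(p-1)}$, whence the coefficient vanishes. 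As a byproduct the identity also records that nontriviality forces $\eta(0)\ne0$ and $\sigma<\tfrac12$.
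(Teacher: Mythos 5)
Your proposal is correct and takes essentially the same route as the paper: both truncate to $[-R,R]$, use the multiplier $\bar\eta$ (real and imaginary parts) together with a virial multiplier (your $z\bar\eta_z$ is just the paper's $\Lambda\bar\eta$ minus $\tfrac12\bar\eta$, which is why your version sees no jump term while the paper's retains $\tfrac12|\eta(0)|^{p+1}$), invoke \eqref{E:eta-asymp} to control the boundary terms and cancel the $R^{2\sigma}$ divergences, and eliminate the divergent combination $\kappa\int|\eta|^2 + h\int z\,\Im(\eta_z\bar\eta)$ between the two real-part identities. Your pointwise monotonicity argument for $\sigma>0$ via $F=\Im(\eta_z\bar\eta)+\tfrac h2 z|\eta|^2$ is simply the un-integrated form of the paper's identity \eqref{E:Pohozhaev2}, which compares $\sigma\int_{-R}^R|\eta|^2$ with the divergence $|c_{0,\lambda,h}|^2R^{2\sigma}$.
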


This is proved in \S\ref{S:Pohozhaev}, and constrains the admissible value for $\sigma$ to be $0<\sigma<1$ (for outgoing solutions).  For a particular choice of $h$, $\kappa$, and $\sigma$, let us denote by $\varphi_{h,\kappa,\sigma}$ the unique nontrivial outgoing solution to \eqref{E:blow-up-profile3} (if it exists).   By scaling we find the relation
$$\varphi_{h,\kappa,\sigma}(z) = \mu^{1/(p-1)} \varphi_{\mu^{-2}h, \mu^{-2}\kappa, \sigma}(\mu z) \,, \qquad \mu>0$$
Hence, by taking $\mu = |\kappa|^{1/2}$, we can convert $\kappa$ to $\pm 1$ while $h$ is converted to $|\kappa|^{-1}h$.  
At this point, we appeal to the special function representation of $w(z,\lambda)$ to compute a formula for $A(\lambda)$ defined in \eqref{E:solve-condition}.  We find (see \eqref{E:A})
$$A(\lambda) = \frac{e^{-i\pi/4} \sqrt{2} \Gamma(\frac34-\frac12i\lambda)}{\Gamma(\frac14-\frac12i\lambda)}$$

\begin{theorem}[existence and uniqueness of outgoing solutions]
\label{T:profiles-kappa+1}
Recall $\lambda = -\kappa h^{-1}-i\sigma$. 
If $\kappa=1$, then for each $0<h<\infty$, there exists a unique $0<\sigma(h)<1$ such that $A(\lambda)$ is real and positive, and thus a corresponding outgoing solution $\varphi_{h,1,\sigma(h)}$.    On the other hand, if $\kappa=-1$, then for each $h>0$ and $0<\sigma<1$, $A(\lambda)$ is \emph{not} real and positive, and thus there are no outgoing solutions $\varphi_{h,-1,\sigma}$.   
\end{theorem}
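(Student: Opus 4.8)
The plan is to reduce the condition ``$A(\lambda)$ is real and positive'' to a statement about a single real phase function and then apply the intermediate value theorem. Writing $s = \kappa/(2h)$, so that $\tfrac14 - \tfrac{i\lambda}{2} = (\tfrac14 - \tfrac\sigma2) + is$ and $\tfrac34 - \tfrac{i\lambda}2 = (\tfrac34 - \tfrac\sigma2) + is$, the formula for $A(\lambda)$ shows that $A$ is real and positive precisely when
\[
g(\sigma) \defeq -\tfrac\pi4 + \Im\log\Gamma(\tfrac34 - \tfrac\sigma2 + is) - \Im\log\Gamma(\tfrac14 - \tfrac\sigma2 + is) \equiv 0 \pmod{2\pi},
\]
and then automatically $A = |A| > 0$ since $\Gamma$ is zero-free. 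I use the principal branch of $\log\Gamma$, which is analytic along the horizontal segment $\sigma \in [0,1]$ at fixed height $s\neq 0$ (that segment never meets the cut $(-\infty,0]$), so $g$ is a genuine real-analytic function of $\sigma$. I fix $h$, equivalently $s$ (with $s>0$ for $\kappa=1$ and $s<0$ for $\kappa=-1$), and study $g$ on $(0,1)$.

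First I would establish strict monotonicity of $g$ using the digamma function $\digamma = \Gamma'/\Gamma$. Differentiating and using $\tfrac{d}{d\sigma}(\tfrac34-\tfrac\sigma2+is) = -\tfrac12$ gives $g'(\sigma) = -\tfrac12 \Im[\digamma(\tfrac34-\tfrac\sigma2+is) - \digamma(\tfrac14-\tfrac\sigma2+is)]$. Inserting the series $\digamma(w) = -\gamma + \sum_{n\ge0}(\tfrac1{n+1} - \tfrac1{n+w})$ and taking imaginary parts yields
\[
g'(\sigma) = \frac s2 \sum_{n=0}^\infty\left[\frac1{(n+\frac14-\frac\sigma2)^2 + s^2} - \frac1{(n+\frac34-\frac\sigma2)^2 + s^2}\right].
\]
Each summand is positive, since for $\sigma \in (0,1)$ one has $|n+\tfrac14-\tfrac\sigma2| < n + \tfrac34 - \tfrac\sigma2$ for every $n \ge 0$ (the only nontrivial case is $n=0$, where $|\tfrac14-\tfrac\sigma2| < \tfrac14 < \tfrac34 - \tfrac\sigma2$). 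Hence $\sgn g'(\sigma) = \sgn s = \sgn\kappa$, so $g$ is strictly increasing when $\kappa=1$ and strictly decreasing when $\kappa=-1$.

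Next I would compute the endpoint values in closed form via the reflection formula $\Gamma(w)\Gamma(1-w) = \pi/\sin(\pi w)$ and the recurrence $\Gamma(w+1) = w\Gamma(w)$. At $\sigma=0$ these reduce $\tfrac{\Gamma(\frac34+is)}{\Gamma(\frac14+is)}$ to $\tfrac\pi{\sin(\pi(\frac14-is))|\Gamma(\frac14+is)|^2}$, giving $g(0) = -\tfrac\pi4 + \arctan(\tanh\pi s)$; similarly at $\sigma=1$, treating $\tfrac{\Gamma(\frac14+is)}{\Gamma(-\frac14+is)}$ gives $g(1) = -\tfrac\pi4 + \pi\,\sgn(s) - \arctan(4s) - \arctan(\tanh\pi s)$. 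For $\kappa=1$ ($s>0$) this yields $g(0) \in (-\tfrac\pi4, 0)$ and $g(1) \in (0, \tfrac{3\pi}4)$, so by strict increase and continuity $g$ vanishes at a unique $\sigma(h) \in (0,1)$; moreover the total variation $g(1)-g(0) = \pi - \arctan(4s) - 2\arctan(\tanh\pi s) < \pi$ guarantees $0$ is the only multiple of $2\pi$ attained, giving the unique outgoing solution. For $\kappa=-1$ ($s<0$) the same formulas give $g(0) \in (-\tfrac\pi2, -\tfrac\pi4)$ and, since $g$ decreases, $g(\sigma) \in (-\tfrac{5\pi}4, -\tfrac\pi4)$ throughout $(0,1)$, an interval containing no multiple of $2\pi$; hence $A(\lambda)$ is never real and positive and no outgoing solution exists.

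The main obstacle I anticipate is the correct determination of the branch of $\arg\Gamma$ in the endpoint formulas, particularly at $\sigma=1$: there $-\tfrac14 + is$ lies in the left half-plane with $\Gamma(-\tfrac14+is)$ sitting near the negative real axis (where $\Gamma(-\tfrac14)<0$), producing the $\pi\,\sgn(s)$ term and a genuine $2\pi$ discontinuity in $s$ across $s=0$. The resolution is that $g$ itself is unambiguous, being the difference of $\Im\log\Gamma$ evaluated at the endpoints of an analytic path at fixed $s\neq0$; one pins down the closed forms by continuity in $s$ within a fixed sign of $s$, and cross-checks them against the sign and size of $g(1)-g(0)$ obtained from the monotonicity formula. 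Since that difference is $<\pi$ in absolute value, the continuous $g$ must coincide with the principal-value expressions with no spurious $2\pi$ wrapping, after which the intermediate value argument above is immediate.
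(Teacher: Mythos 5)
Your proposal is correct and follows essentially the same route as the paper: both reduce the condition to the phase $f(\sigma,h^{-1})=\Im\log A(\lambda)$, prove strict monotonicity in $\sigma$ with sign $\sgn\kappa$ via the digamma series $\psi(z)-\psi(z+\tfrac12)$, evaluate the endpoints $\sigma=0,1$ using the reflection formula and the recurrence $\Gamma(w+1)=w\Gamma(w)$ with the same branch-fixing device (your anchoring of $\log\Gamma(z)=\log\Gamma(z+1)-\log z$ by continuity is the paper's \eqref{E:gprod} argument), and conclude by the intermediate value theorem that the range of $f$ meets $2\pi\mathbb{Z}$ exactly once for $\kappa=1$ and not at all for $\kappa=-1$. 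The only cosmetic differences are that you obtain sharper closed-form endpoint values (e.g.\ $f(0)=-\tfrac\pi4+\arctan(\tanh\pi s)$, consistent with the paper's $-\Im\log(1+ie^{-\pi/h})$) where the paper records only interval bounds, and you treat $\kappa=-1$ explicitly where the paper omits it as analogous.
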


Theorem \ref{T:profiles-kappa+1} is proved in \S\ref{S:profiles}.   For $\kappa=1$, we numerically solved for $\sigma(h)$ using the MATLAB \texttt{fzero} function.  The results are displayed in Figure \ref{F:1}, and show that the $h\to 0$ asymptotic formula $\sigma(h) = 2e^{-\pi/h} h^{-1} (1+O(h))$ given in the next theorem is already a good approximation at $h=1$.  Moreover, the numerical solution shows that $\sigma(h^{-1})$ is decreasing as a function of $h^{-1}$, starting from $\sigma=\frac12$ at the limit $h^{-1}=0$.  Thus we have the numerical finding that $0<\sigma(h)<\frac12$ for all $h>0$ (as opposed to just $0<\sigma(h)<1$) and moreover, that for each $0<\sigma<\frac12$, there exists a unique $0<h<\infty$ such that $\sigma = \sigma(h)$.  

\begin{theorem}[asymptotic formulae for $\sigma(h)$]
\label{T:profiles-asymptotic}
Let $\kappa=1$.  For all $0<h \ll 1$,  the unique value $\sigma(h)$ such that $A(\lambda)$ is real and positive (as in Theorem \ref{T:profiles-kappa+1}) is
$$\sigma(h) = 2e^{-\pi/h} h^{-1} (1+O(h))$$
For $h \gg 1$ (i.e. $0<h^{-1}\ll 1$), 
$$\sigma(h) = \tfrac12 - h^{-1} + O(h^{-2})$$
\end{theorem}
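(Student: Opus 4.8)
The plan is to translate the positivity requirement on $A(\lambda)$ into a phase equation and then locate its root asymptotically. With $\kappa=1$ we have $\lambda=-h^{-1}-i\sigma$, so that $\frac34-\frac12 i\lambda=\frac34-s+iu$ and $\frac14-\frac12 i\lambda=\frac14-s+iu$, where I abbreviate $u=\frac1{2h}$ and $s=\frac\sigma2$. Writing $\Theta(\sigma,h)=\arg A(\lambda)=-\frac\pi4+\arg\Gamma(\frac34-s+iu)-\arg\Gamma(\frac14-s+iu)$, the requirement that $A(\lambda)$ be real and positive is $\Theta(\sigma,h)=0$ together with a sign check on $\Re A$. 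Existence and uniqueness of the root $\sigma(h)\in(0,1)$ are already supplied by Theorem \ref{T:profiles-kappa+1}; the remaining task is to pin down its size, which I will do by expanding $\Theta$ and inverting (the implicit function theorem applies once $\partial_\sigma\Theta\neq0$ at the root, which I verify below).

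For $0<h\ll1$ (so $u\to+\infty$) I would split $\arg\Gamma(a+iu)$, via the log Binet formula, into its elementary Stirling part and the exponentially small Binet integral. A direct computation of the Stirling part shows that the polynomial-in-$h$ contribution to $\Theta$ equals $\frac{\sigma h}2$ at leading order and, crucially, vanishes to \emph{all} polynomial orders when $\sigma=0$. Hence the root is governed by the exponentially small remainder, which I evaluate cleanly at $\sigma=0$ using the reflection formula $\Gamma(z)\Gamma(1-z)=\pi/\sin\pi z$:
\[
\frac{\Gamma(\tfrac34+iu)}{\Gamma(\tfrac14+iu)}=\frac{|\Gamma(\tfrac34+iu)|^2}{\pi}\,\sin\!\big(\pi(\tfrac14+iu)\big),
\]
whose argument equals $\arctan(\tanh\pi u)=\frac\pi4-e^{-2\pi u}+O(e^{-6\pi u})$. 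Thus $\Theta(0,h)=-e^{-\pi/h}+O(e^{-3\pi/h})$ (using $2\pi u=\pi/h$), and since $\partial_\sigma\Theta=\frac h2>0$ near $\sigma=0$, balancing the exponentially small value against the linear term $\frac{\sigma h}2$ gives $\sigma(h)=2h^{-1}e^{-\pi/h}(1+O(h))$.

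For $h\gg1$ (so $u\to0^+$) the root instead approaches the boundary $\sigma=\frac12$, so I set $s=\frac14-\eta$, i.e.\ $\sigma=\frac12-2\eta$, and note that the arguments become $\eta+iu$ (near the pole of $\Gamma$ at the origin) and $\frac12+\eta+iu$. Using $\Gamma(w)=w^{-1}-\gamma+O(w)$ near $w=0$ and the Taylor expansion of $\log\Gamma$ about $\frac12$ with $\psi(\tfrac12)=-\gamma-2\ln2$, I obtain $\Theta=-\frac\pi4+\arctan(u/\eta)-2u\ln2+O(u^2,\eta^2)$. Setting $\Theta=0$ gives $\arctan(u/\eta)=\frac\pi4+2u\ln2$, hence $\eta=u\,(1-4u\ln2+\cdots)$; translating back through $\eta=\frac14-\frac\sigma2$ and $u=\frac1{2h}$ yields $\sigma(h)=\frac12-h^{-1}+O(h^{-2})$.

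The main obstacle is the $h\to0$ regime: because the full Stirling/log-Binet asymptotic series of $\Theta$ vanishes identically at $\sigma=0$, the leading behavior of the root is set entirely by terms beyond all polynomial orders, and a naive Stirling expansion is useless. The key device is the reflection-formula identity above, which collapses the delicate phase difference into the single elementary function $\arctan(\tanh\pi u)$ and thereby exposes both the exponential rate $e^{-\pi/h}$ and the precise prefactor $2h^{-1}$. Secondary points to verify are that the relevant root indeed corresponds to $\Re A>0$ (rather than a negative real value of $A$) and that $\partial_\sigma\Theta>0$ there, which legitimizes the asymptotic inversion in both regimes; a careful accounting of the subleading Stirling terms and of the $s$-dependence of the exponentially small part then upgrades the leading expressions to the stated $O(h)$ and $O(h^{-2})$ remainders.
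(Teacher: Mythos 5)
Your proposal is correct and follows the paper's overall strategy---reduce positivity of $A(\lambda)$ to the vanishing of the phase $f(\sigma,h^{-1})=\Im\log A(\lambda)$, isolate the exponentially small term with the reflection formula, expand the gamma phases by Stirling/Binet, and balance $\tfrac12\sigma h$ against $-e^{-\pi/h}$---but with two genuinely different local choices worth recording. For $0<h\ll1$ the paper keeps the $\sigma$-dependence inside the reflection step: applying $-z\Gamma(-z)\Gamma(z)=\pi/\sin\pi z$ at $z=\tfrac14+\tfrac12\sigma-\tfrac12 ih^{-1}$ yields \eqref{E:1-142}, whose exponentially small piece $-\Im\log(1+ie^{-i\sigma\pi}e^{-\pi/h})$ is then linearized, and the remaining gamma ratio (arguments differing by $\sigma$, with equal imaginary parts) is expanded via Lemma \ref{L:gamma-difference}, a $\sigma$-shift expansion proved from Binet's formula \eqref{E:Binet}. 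You instead work from the form \eqref{E:f-1} (arguments differing by $\tfrac12$), evaluate the phase \emph{exactly} at $\sigma=0$ through the identity $\arg\bigl[\Gamma(\tfrac34+iu)/\Gamma(\tfrac14+iu)\bigr]=\arctan(\tanh\pi u)=\tfrac\pi4-e^{-2\pi u}+O(e^{-6\pi u})$, and perturb in $\sigma$; this is a clean and correct way to expose both the rate $e^{-\pi/h}$ and the vanishing of all polynomial-in-$h$ contributions at $\sigma=0$ (your $O(e^{-6\pi u})$ remainder, via the Gudermannian cancellation, is sharper than needed). The one step you promise but do not execute---uniform control of $\partial_\sigma f=\tfrac h2(1+O(h))$ on the relevant $\sigma$-range together with the $O(\sigma e^{-\pi/h})$ dependence of the exponential term, which the paper implements as the explicit two-sided bracketing \eqref{E:log-B-expand-2}---is precisely where the relative error $1+O(h)$ is earned, so it must be carried out to finish; note also that the cancellation of the $-\tfrac h4$ term in the Stirling series for $\arg\Gamma(z+\tfrac12)-\arg\Gamma(z)$ against the $1/(8z)$ correction is what makes your claimed leading term $\tfrac{\sigma h}2$ true, and deserves a line of verification. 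For $h\gg1$ your argument is in fact \emph{more} quantitative than the paper's: the paper argues qualitatively that $\sigma\to\tfrac12$ (else $A$ tends to $e^{-i\pi/4}$ times a real number) and reads off $\arg z\to\tfrac\pi4$ from $A=e^{-i\pi/4}\sqrt2\,z\,\Gamma(\tfrac12+z)/\Gamma(1+z)$, whereas your expansion near the pole, using $\psi(\tfrac12)=-\gamma-2\ln2$, produces the next coefficient ($\sigma=\tfrac12-h^{-1}+2\ln2\,h^{-2}+\cdots$) and thus delivers the stated $O(h^{-2})$ honestly; the a priori localization $\sigma(h)\to\tfrac12$, which you assert rather than prove, follows either from the paper's positivity argument or from the monotonicity $\partial_\sigma f>0$ of Lemma \ref{L:profiles-kappa+1}.
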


This is proved in \S\ref{S:profiles-asymptotic} and numerically confirmed in Fig. \ref{F:1}.

\begin{figure}
\includegraphics[scale=0.4]{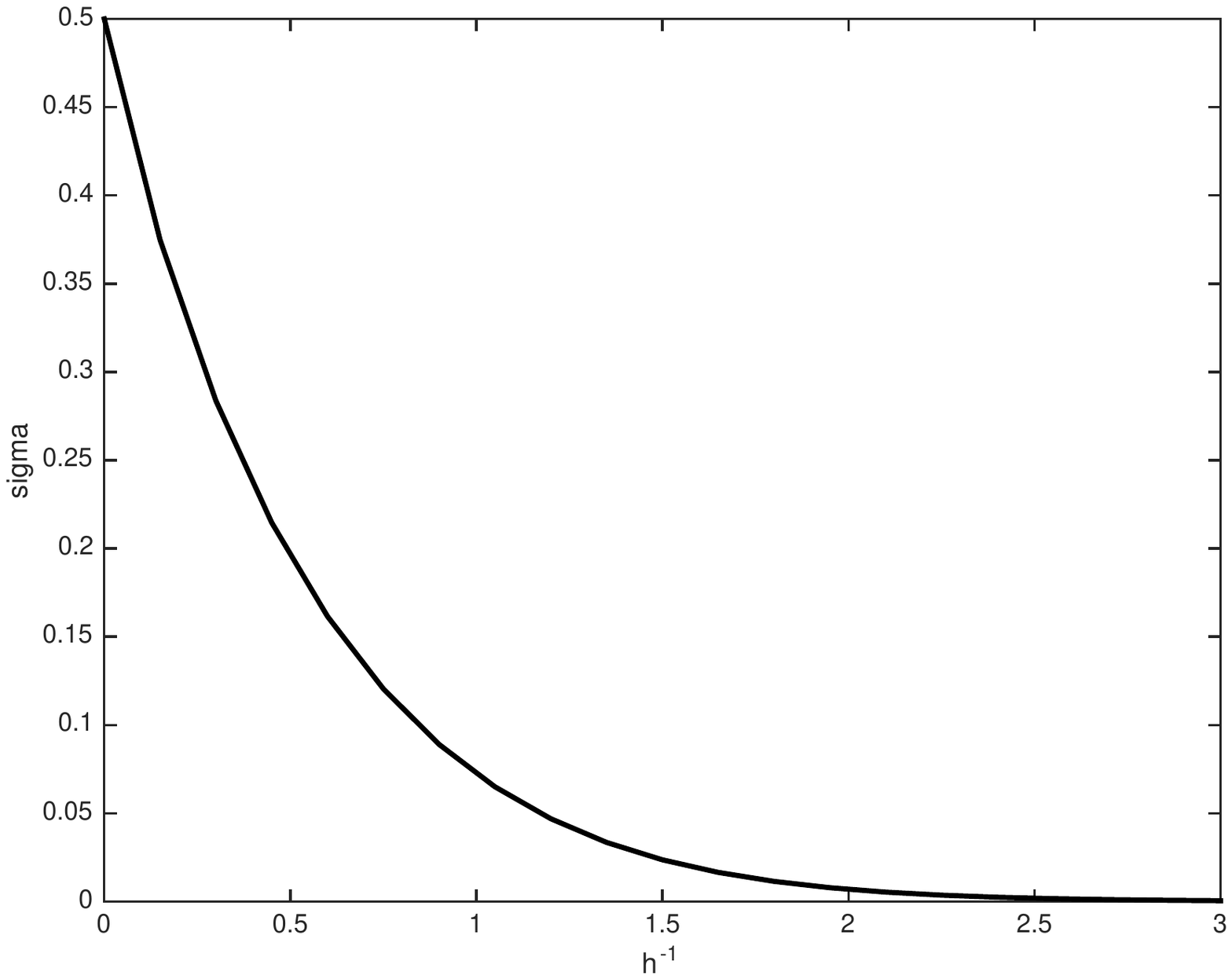}
\includegraphics[scale=0.4]{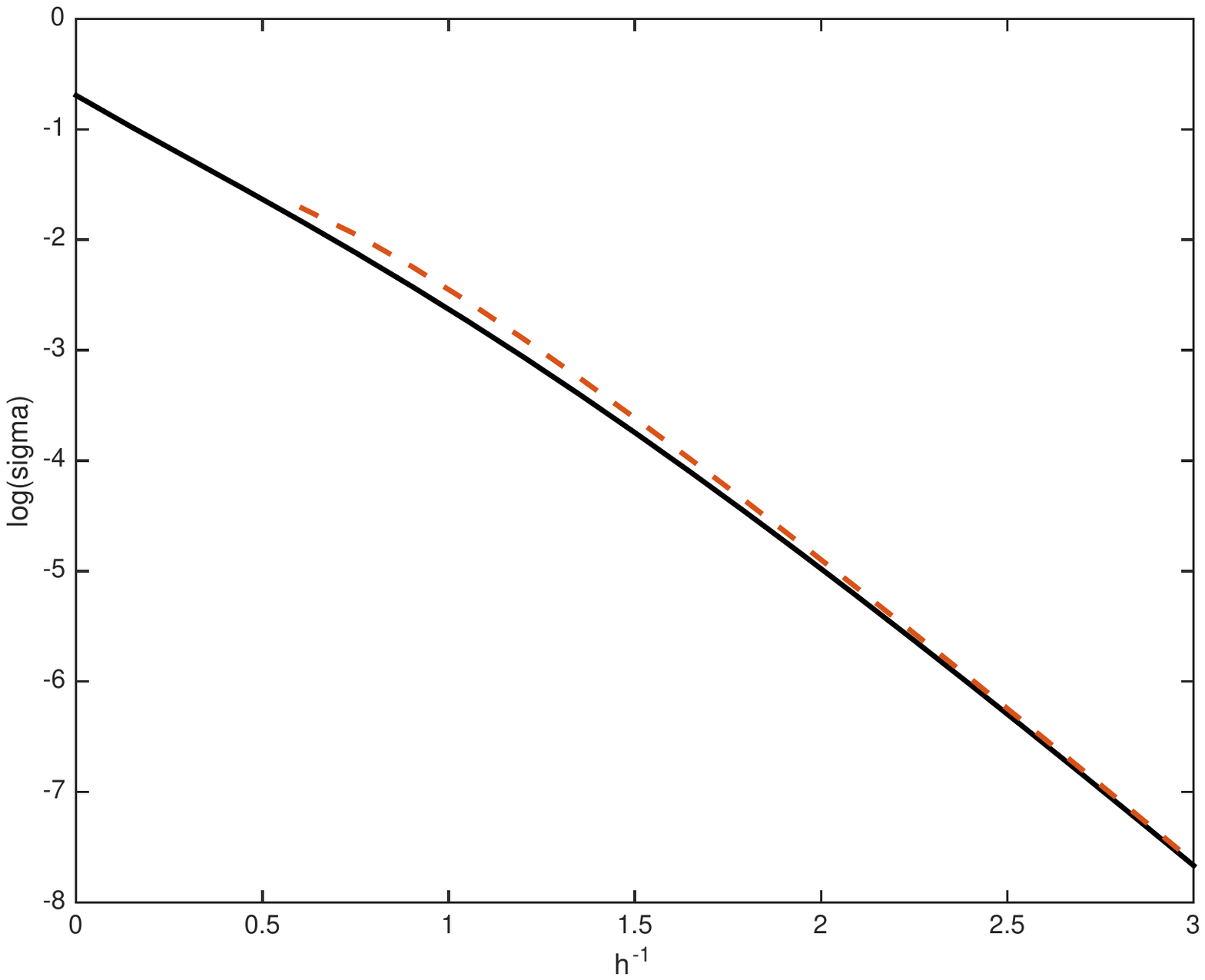}
\caption{ \label{F:1} (left) plot of numerical solution of $\sigma(h)$ versus $h^{-1}$, (right) plot of numerical solution of $\log \sigma(h)$ versus $h^{-1}$ in solid black, together with $h^{-1}\to \infty$ asymptotic $\log \sigma(h) = \log 2 -\pi h^{-1} + \log h^{-1}$, from Theorem \ref{T:profiles-asymptotic}, in dashed red, showing good agreement for $h^{-1}>1$.}
\end{figure}

Recalling that outgoing solutions $\varphi$ to \eqref{E:blow-up-profile3} correspond to finite energy (hence zero energy) solutions $\eta$ to \eqref{E:blow-up-profile2}, we still need to invert the relationship between $h$ and $\sigma$.  Indeed, in Theorem \ref{T:ansatz}, we are given $p>3$, and hence $0<\sigma_c<\frac12$, and need to find $h$ such that $\sigma(h)=\sigma_c$.  For $p\approx 3$, the first-order approximation is $ h \approx \pi [ \log 8/(p-3) ]^{-1}$.

In the next theorem, for given $0<h \ll 1$ and $\sigma(h)=2e^{-\pi/h} h^{-1} (1+O(h))$, we employ contour deformation and stationary phase in the parabolic Weber functions to better understand the shape of the outgoing solution $\varphi(z) = \varphi_{h,1,\sigma(h)}(z)$.

\begin{theorem}[outgoing profile $\varphi$ asymptotics as $h\to 0$]
\label{T:WKB}
As $h\to 0$, we have the expansion
$$ \varphi(x) \approx 
\begin{cases}
2^{1/2} (1-\tfrac14h^2x^2)^{-1/4} e^{-h^{-1}[\arcsin(\frac12 hx) + \frac12 hx(1-\frac14h^2x^2)^{1/2}]} & \text{for }x\ll 2h^{-1} \\
c_h e^{\frac14 ix^2} x^{-ih^{-1}+\sigma-\frac12} & \text{for }x\gg 2h^{-1}
\end{cases}
$$
where $c_h = 2 e^{\pi i/4} e^{-\frac12 i h^{-1}} e^{-\pi/(2h)}$.
\end{theorem}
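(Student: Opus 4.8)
The plan is to reduce the statement to the large-parameter ($h^{-1}\to\infty$) asymptotics of the single Weber function $w(z,\lambda)$ occurring in the outgoing representation \eqref{E:1-135}, together with the scalar normalization. Since $\kappa=1$ and, by Theorem \ref{T:profiles-asymptotic}, $\sigma=\sigma(h)=2e^{-\pi/h}h^{-1}(1+O(h))$ is exponentially small, we have $\lambda=-h^{-1}-i\sigma$ and $h\lambda=-1+O(e^{-\pi/h})$, so that the equation $w_{zz}+(\tfrac14 h^2z^2+h\lambda)w=0$ underlying \eqref{E:blow-up-profile3} is governed, to all algebraic orders in $h$, by the effective potential $\tfrac14 h^2z^2-1$. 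Its unique turning point $z_*=2h^{-1}$ separates the classically forbidden region $z<z_*$ (where I expect real-exponential, Laplace-type behavior) from the allowed region $z>z_*$ (oscillatory behavior), which is exactly the dichotomy $x\ll 2h^{-1}$ versus $x\gg 2h^{-1}$ in the statement. Using \eqref{E:alpha2} and \eqref{E:solve-condition} I write $\varphi(z)=\alpha w(|z|,\lambda)=(2h^{1/2}A(\lambda))^{1/(p-1)}\,w(z,\lambda)/w(0,\lambda)$, so it suffices to determine (i) the scalar $(2h^{1/2}A(\lambda))^{1/(p-1)}$ and (ii) the ratio $w(z,\lambda)/w(0,\lambda)$ in each region.

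For the scalar factor I would substitute the formula \eqref{E:A} for $A(\lambda)$ and apply the log Binet formula to the resulting ratio of gamma functions. With $-\tfrac12 i\lambda=\tfrac12 ih^{-1}-\tfrac12\sigma$, the ratio asymptotic $\Gamma(\zeta+a)/\Gamma(\zeta+b)\sim \zeta^{\,a-b}$ with $\zeta=\tfrac12 ih^{-1}$, $a=\tfrac34-\tfrac12\sigma$, $b=\tfrac14-\tfrac12\sigma$ gives $A(\lambda)\sim e^{-i\pi/4}\sqrt2\,(\tfrac12 ih^{-1})^{1/2}=h^{-1/2}$, so that $2h^{1/2}A(\lambda)\to 2$ (and, as a sanity check, the two factors $e^{\mp i\pi/4}$ cancel, consistent with $A(\lambda)$ being real and positive). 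Since $h\to 0$ forces $\sigma_c=\sigma(h)\to 0$, i.e. $p\to 3^+$ and $1/(p-1)\to\tfrac12$, the scalar factor converges to $2^{1/2}$; in particular this is the value of $\varphi(0)$, matching the inner formula at $x=0$.

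For the ratio $w(z,\lambda)/w(0,\lambda)$ I would work from the integral representation \eqref{E:w-wstar}, rescaling the integration variable so that the exponent takes the form $h^{-1}\Phi(t;s)$ with $s=\tfrac12 hz$ fixed and $\Phi$ analytic. In the forbidden region $0\le s<1$ the relevant saddle is real, and a steepest-descent (Laplace) evaluation produces the Gaussian-width amplitude $(1-\tfrac14 h^2z^2)^{-1/4}$ together with the exponent $-\int_0^z(1-\tfrac14 h^2z'^2)^{1/2}\,dz'$; the elementary evaluation $\int_0^z(1-\tfrac14 h^2z'^2)^{1/2}\,dz'=h^{-1}[\arcsin(\tfrac12 hz)+\tfrac12 hz(1-\tfrac14 h^2z^2)^{1/2}]$, which vanishes at $z=0$ and hence normalizes the ratio to $1$ there, yields precisely the inner profile once multiplied by the scalar $2^{1/2}$. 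For the allowed region $s>1$ I would continue the same saddle across $z_*$: it leaves the real axis, the action becomes oscillatory, and the value $f(1)=\tfrac\pi2$ of the WKB action $f(s)=\arcsin s+s(1-s^2)^{1/2}$ at the turning point produces the real factor $e^{-\pi/(2h)}$. The most efficient route to the constant is then to feed the $z\to+\infty$ asymptotic \eqref{E:asymp} into $\varphi=\alpha w$, giving $c_h=(2h^{1/2}A(\lambda))^{1/(p-1)}(h^{1/2})^{i\lambda-\frac12}e^{\pi\lambda/4}e^{i\pi/8}/w(0,\lambda)$, and to evaluate $w(0,\lambda)$ again by the log Binet formula: here $e^{\pi\lambda/4}=e^{-\pi/(4h)}e^{-i\pi\sigma/4}$ and $w(0,\lambda)\sim e^{\pi/(4h)}$ combine to produce $e^{-\pi/(2h)}$, the subleading term in $s(s^2-1)^{1/2}=s^2-\tfrac12+O(s^{-2})$ under the continuation produces the constant phase $e^{-\tfrac12 ih^{-1}}$, and the branch of $(1-\tfrac14 h^2z^2)^{-1/4}$ for $z>z_*$ supplies both a further $2^{1/2}$ (so that $2=2^{1/2}\cdot2^{1/2}$) and the phase $e^{\pi i/4}$, while the power $x^{i\lambda-\frac12}=x^{-ih^{-1}+\sigma-\frac12}$ comes directly from \eqref{E:asymp}; collecting the algebraic prefactors and phases gives $c_h$.

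The main obstacle is the bookkeeping of constants, phases, and powers of $h$, rather than the leading exponential shapes. Three points demand care. First, the transition zone $z\approx z_*=2h^{-1}$, where neither the Laplace nor the stationary-phase approximation holds: turning the inner and outer expansions into a rigorous matched pair requires a uniform, Airy-type treatment of the coalescing saddle, and it is precisely this connection that pins down the relative phase $e^{\pi i/4}$, the overall $O(1)$ connection constant, and the correct branch of $(1-\tfrac14 h^2z^2)^{-1/4}$ for $z>z_*$. Second, the amplitude $e^{-\pi/(2h)}$ in the allowed region is exponentially subdominant, so the contour deformation must be arranged so that no competing saddle or endpoint contribution of larger size is retained; this is a genuine Stokes-phenomenon issue. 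Third, one must verify that the steepest-descent amplitude is consistent with the gamma-function (Binet) asymptotics of the scalar $(2h^{1/2}A(\lambda))^{1/(p-1)}$ and of $w(0,\lambda)$, so that the two independent computations of $c_h$ agree to the stated order; this forces one to retain the $O(h)$ corrections in $\sigma(h)$ and in Binet's formula, and to track carefully the $(h^{1/2})^{i\lambda}$- and $h^{-ih^{-1}}$-type factors, which is exactly where sign and branch errors are easiest to make.
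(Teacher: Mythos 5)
Your proposal is correct and follows essentially the same route as the paper's proof: the paper likewise writes $\varphi=\alpha\, w(|x|,\lambda)$ with the scalar $\alpha=(2h^{1/2}A(\lambda))^{1/(p-1)}/v(0,\lambda)$ evaluated by the gamma-ratio and Binet/Stirling asymptotics (yielding \eqref{E:alpha-asymp}, with $2h^{1/2}A\to 2$ and $2^{1/(p-1)}\to 2^{1/2}$ exactly as you argue), and obtains the two regimes of the Weber function from the rescaled integral representation by contour rotation and deformation, Laplace's method at the saddle $\theta_0=\arcsin\alpha$ in the forbidden region (producing your action $\arcsin(\tfrac12 hx)+\tfrac12 hx(1-\tfrac14h^2x^2)^{1/2}$ and amplitude $(1-\tfrac14h^2x^2)^{-1/4}$), and stationary phase in the allowed region (producing $e^{-\pi/(2h)}$ and the constant phase $e^{-\frac12 ih^{-1}}$ from the $-\tfrac12$ subleading term of the action, just as you describe). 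The only cosmetic difference is that you extract $c_h$ from the exact $z\to+\infty$ asymptotic \eqref{E:asymp} together with Binet's formula for $w(0,\lambda)$, whereas the paper re-derives the outer behavior from a second stationary point on the imaginary-axis piece of the same deformed contour; this is equivalent bookkeeping, and your attention to the Stokes/turning-point matching and to the residual $h$-power and $e^{ih^{-1}\log h^{-1}}$ factors is, if anything, more careful than the paper's own ``$\approx$'' computations.
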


This is proved in \S\ref{S:WKB}.

\subsection*{Acknowledgments}

We thank Catherine Sulem, Galina Perelman, and Maciej Zworski for discussions about this topic, suggestions, and encouragement.  The material in this paper will be included as part of the PhD thesis of the second author at Brown University.  While this work was completed, the first author was supported in part by NSF grants DMS-1200455,  DMS-1500106.  The second author was supported in part by NSF grant DMS-1200455 (PI Justin Holmer).

\section{$L^2$ supercritical blow-up ansatz}
\label{S:ansatz}

In this section we prove Theorem \ref{T:ansatz}.   Using the scaling property \eqref{E:scaling} as a model, we examine \emph{self-similar} solutions of the form
$$\psi(x,t) = \lambda(t)^{1/(p-1)} u( \lambda(t) x, \tau(t))$$
We convert the equation \eqref{E:pNLS} into an equation for $u(z,\tau)$ by computing
\begin{align*}
\indentalign i\psi_t + \psi_{xx} + \delta |\psi|^{p-1}\psi  \\
&= \lambda^{\frac{1}{p-1}+2} \left( i \lambda^{-2} \frac{d\tau}{dt} \partial_\tau u + i \lambda^{-3} \lambda_t (-\sigma_c + \Lambda) u + u_{zz} + \delta |u|^{p-1}u \right)
\end{align*}
where $\Lambda \defeq \frac12 + z\partial_z$.  If $u(z,\tau)= e^{i\tau}\eta(z)$, then \eqref{E:pNLS} holds if and only if 
\begin{equation}
\label{E:ansatz1}
0 = - \lambda^{-2} \frac{d\tau}{dt} \eta + i \lambda^{-3}\lambda_t(-\sigma_c+\Lambda) \eta + \eta_{zz} + \delta |\eta|^{p-1}\eta
\end{equation}
Let $\kappa(t) = \lambda^{-2} \frac{d\tau}{dt}$ and $h(t) = \lambda^{-3}\lambda_t$ so that
\begin{equation}
\label{E:ansatz2}
0 = - \kappa(t) \eta + i h(t)(-\sigma_c+\Lambda) \eta + \eta_{zz} + \delta |\eta|^{p-1}\eta
\end{equation}
Then $\kappa(t)$ and $h(t)$ are in fact constant.  Indeed, for $t_1\neq t_2$, we subtract \eqref{E:ansatz2} at the two times to obtain
$$  i(h(t_1)-h(t_2))(-\sigma_c+\Lambda)\eta = (\kappa(t_1)-\kappa(t_2)) \eta$$
which can rewritten as
$$i(h(t_1)-h(t_2))  z f'(z)= (\kappa(t_1)-\kappa(t_2)) f(z) \,, \qquad f(z)=z^{1/(p-1)}\eta(z)$$
for which there are no nontrivial solutions compatible with \eqref{E:ansatz2} at $t=t_1$.  This forces $\kappa(t_1)=\kappa(t_2)$ and $h(t_1)=h(t_2)$, and hence $\kappa(t)=\kappa$ and $h(t)=h$ are constant and \eqref{E:ansatz2} becomes \eqref{E:blow-up-profile1}.

Since $-\frac12(\lambda^{-2})_t = h$ and $\lambda^{-1}(T_*)=0$, we integrate to obtain $\lambda^{-2}(t) = 2h(T_*-t)$.  Since we want to approach the blow-up time $T_*$ from below ($t<T_*$), we have that $h>0$ and the first equation in \eqref{E:lambda} holds.  Combining this with $\kappa = \lambda^{-2} \frac{d\tau}{dt}$ gives 
$$\frac{d\tau}{dt} = \lambda^2 \kappa = \frac{\kappa}{2h(T_*-t)} \,,$$
 and integrating gives the second equation in \eqref{E:lambda}.  

\section{Parabolic cylinder functions}
\label{S:Weber}

The following material is drawn in part from Slavyanov \cite{MR1398655}, pp. 21-31.  Consider the \emph{Weber equation}\footnote{This equation appears as (1) in \S 8.1 of p. 116 of the Bateman Manuscript Project, Higher Transcendental Functions, Volume II.   It is also written in \cite{MR1398655} as (3.1) on p. 22.}
\begin{equation}
\label{E:2.4}
\gamma'' + (\nu + \tfrac12 - \tfrac14 z^2) \gamma=0
\end{equation}
Any solution to \eqref{E:2.4} is called a \emph{parabolic cylinder function} or \emph{Weber-Hermite function}.  One solution to \eqref{E:2.4} is $\gamma(z)=D_\nu(z)$, defined for $\Re \nu <0$ by the integral formula\footnote{This appears as item (3) in \S 8.3 of p. 119 of the Bateman Manuscript Project, Higher Transcendental Functions, Volume II.  It also appears as (3.2) on p. 22 of \cite{MR1398655}.}
\begin{equation}
\label{E:2.3}
D_\nu(z) = \frac{e^{-\frac14 z^2}}{\Gamma(-\nu)} \int_0^\infty e^{-zt} e^{-\frac12 t^2} t^{-\nu-1} \, dt
\end{equation}
  The function $D_\nu(z)$ can be extended analytically to all $\nu \in \mathbb{C}$, but for now we do not need formulae for $\Re \nu \geq 0$.  The fact that \eqref{E:2.3} is a solution to \eqref{E:2.4} can be verified by direct computation of the second derivative of \eqref{E:2.3} (differentiation under the integral sign and integration by parts).  We calculate
\begin{equation}
\label{E:Dzero}
D_\nu(0) = \frac{1}{\Gamma(-\nu)} \int_0^\infty e^{-\frac12t^2} t^{-\nu-1} \, dt = \frac{2^{-\frac{\nu}{2}-1}\Gamma(-\frac{\nu}{2})}{\Gamma(-\nu)} =  \frac{ \sqrt{\pi} \, 2^{\frac{\nu}{2}}}{\Gamma(\frac{1-\nu}{2})}
\end{equation}
\begin{equation}
\label{E:Dzeroprime}
D_\nu'(0) = - \frac{1}{\Gamma(-\nu)} \int_0^\infty e^{-\frac12 t^2} t^{-\nu} \, dt 
= - \frac{ 2^{-\frac{\nu}{2}-\frac12} \Gamma(-\frac{\nu}{2}+\frac12) }{\Gamma(-\nu)}
= - \frac{\sqrt{\pi} \, 2^{\frac{\nu}{2}+\frac12}}{\Gamma(-\frac{\nu}{2})}
\end{equation}
For both formulae we have used the duplication formula
$$\Gamma(\nu) = \frac{2^{\nu-1}}{\sqrt{\pi}} \Gamma(\tfrac{\nu}{2})\Gamma(\tfrac{\nu}{2}+\tfrac12)$$
The formula \eqref{E:2.3} applies for $\Re \nu <0$, but for one remark, we do need to know the solution $D_\nu(z)$ to \eqref{E:2.4} at $\nu=0$.   From the formulae \eqref{E:Dzero} and \eqref{E:Dzeroprime}, we obtain $D_0(0)=1$ and $D_0'(0)=0$.  It is straightforward by direct computation to confirm that $\gamma(z) = e^{-z^2/4}$ is the unique solution to \eqref{E:2.4} satisfying these initial conditions, so we conclude that $D_0(z) = e^{-z^2/4}$.

It is straightforward to verify that since $\gamma_1(z) =D_\nu(z)$ solves \eqref{E:2.4}, so do the two functions $\gamma_2(z)= D_\nu(-z)$ and $\gamma_3(z)=D_{-\nu-1}(iz)$.  Let
$$
C=  
\begin{bmatrix}
\gamma_2(0) & \gamma_3(0) \\
\gamma_2'(0) & \gamma_3'(0)
\end{bmatrix}
=
\sqrt{\pi}
\begin{bmatrix}
 \frac{2^{\nu/2}}{\Gamma( \frac{1-\nu}{2} )} & \frac{2^{(-\nu-1)/2}}{\Gamma(\frac{2+\nu}{2})} \\
 \frac{2^{(\nu+1)/2}}{\Gamma(-\frac{\nu}{2})} &
-i \frac{2^{-\nu/2}}{\Gamma(\frac{1+\nu}{2})}
\end{bmatrix}
$$
The Wronskian is given by
$$ W(\gamma_3, \gamma_2) = \det C = \frac{-i\pi}{\Gamma( \frac{1-\nu}{2}) \Gamma( \frac{1+\nu}{2})} - \frac{\pi}{\Gamma( \frac{2+\nu}{2}) \Gamma( - \frac{\nu}{2})}$$
To simplify this, we use the standard gamma function identity
\begin{equation}
\label{E:2.7}
\Gamma(z) \Gamma(1-z) = \frac{\pi}{\sin \pi z}
\end{equation}
Taking $z = \frac{1+\nu}{2}$ in \eqref{E:2.7}, we obtain
$$\Gamma( \frac{1-\nu}{2}) \Gamma( \frac{1+\nu}{2}) = \frac{\pi}{\sin \pi( \frac{1-\nu}{2})} = \frac{\pi}{\cos \frac{\pi \nu}{2}}$$
Taking $z= \frac{2+\nu}{2}$ in \eqref{E:2.7}, we have
$$\Gamma( \frac{2+\nu}{2}) \Gamma( - \frac{\nu}{2} ) = \frac{\pi}{\sin \pi ( \frac{2+\nu}{2})}= - \frac{\pi}{\sin \frac{\pi\nu}{2}}$$
Substituting, we obtain\footnote{This Wronskian formula agrees with the $+$ case of (3.13) of \cite{MR1398655} on p. 24.}
\begin{equation}
\label{E:Wronsk}
W(\gamma_3,\gamma_2) = -i\cos\frac{\pi \nu}{2} + \sin \frac{\pi \nu}{2}= -i e^{i\pi \nu/2}
\end{equation}
Thus, for all $\nu \in \mathbb{C}$, $W(\gamma_2,\gamma_3)\neq 0$ and $\{ \, \gamma_2, \gamma_3 \, \}$ is a basis for the space of solutions to \eqref{E:2.4}.  It follows that there exist $\alpha$, $\beta$ such that 
$$\gamma_1(z) = \alpha\gamma_2(z) + \beta\gamma_3(z)$$
The constants $\alpha$, $\beta$ are found by taking $z=0$ to obtain the system
$$\begin{bmatrix} \alpha \\ \beta \end{bmatrix} = C^{-1} \begin{bmatrix} \gamma_1(0) \\ \gamma_1'(0) \end{bmatrix} 
= i\pi e^{-i\pi \nu/2} \begin{bmatrix} \frac{-i}{\Gamma(\frac{1+\nu}{2}) \Gamma( \frac{1-\nu}{2})} + \frac{1}{\Gamma(\frac{2+\nu}{2})\Gamma(-\frac{\nu}{2})} \\
-\frac{ 2^{\nu+\frac32}}{\Gamma(-\frac{\nu}{2}) \Gamma( \frac{1-\nu}{2})} \end{bmatrix} 
$$
Using the Gamma function identities as before, we simplify to
$$\begin{bmatrix} \alpha \\ \beta \end{bmatrix} = ie^{-i\pi \nu/2} \begin{bmatrix}  -i e^{-i\pi \nu/2} \\ -\frac{\sqrt{2\pi}}{ \Gamma(-\nu)} \end{bmatrix}  $$
Hence we have\footnote{This agrees with Slavyanov \cite{MR1398655} equation (3.14) on p. 24}
\begin{equation}
\label{E:2.6}
D_\nu(z) = e^{-i\pi \nu} D_\nu(-z) + e^{-i\pi (\nu+1)/2} \frac{\sqrt{2\pi}}{\Gamma(-\nu)}D_{-\nu-1}(iz)
\end{equation}
For real $\nu$ and $z$, we can take the complex conjugate to obtain
\begin{equation}
\label{E:2.6b}
D_\nu(z) = e^{i\pi \nu} D_\nu(-z) + e^{i\pi (\nu+1)/2} \frac{\sqrt{2\pi}}{\Gamma(-\nu)}D_{-\nu-1}(-iz)
\end{equation}
However, \eqref{E:2.6b} remains valid for all $\nu, z \in \mathbb{C}$ by analytic continuation.

Consider now the scalar Schr\"odinger operator with inverted harmonic potential.  Two solutions to the second-order ODE 
\begin{equation}
\label{E:2.1}
-\partial_x^2 v-\tfrac14 x^2 v =\lambda v
\end{equation}
are given by
\begin{equation}
\label{E:2.2}
v(x) = D_{i\lambda -\frac12} ( e^{-i\pi/4} x)
\end{equation}
\begin{equation}
\label{E:2.8}
v^*(x) = D_{-i\lambda - \frac12}(e^{i\pi/4}x)
\end{equation}
The fact that $v(x)$ and $v^*(x)$ solve \eqref{E:2.1} is verified by using the fact that $D_\nu(z)$ is a solution to \eqref{E:2.4}.  

In this section, we record some properties of $v(x)$ and $v^*(x)$.    In particular, we obtain the asymptotics of $v(x)$ and $v^*(x)$ as $x\to \pm \infty$, the values of $v(0)$, $v^*(0)$, $v'(0)$, $(v^*)'(0)$, and the Wronskians $W[v(x),v(-x)]$ and $W[v(x),v^*(x)]$.  

Substituting \eqref{E:2.3} into \eqref{E:2.2} with $\nu = i\lambda - \frac12$, we obtain the integral formula
\begin{equation}
\label{E:2.5}
v(x) = \frac{e^{\frac14 ix^2}}{\Gamma(-i\lambda + \frac12)} \int_0^\infty e^{-\frac12 t^2} e^{- \big( \frac{1-i}{\sqrt{2}}\big) tx} \, t^{-i\lambda- \frac12} \, dt \,, \qquad \Im \lambda>-\frac12
\end{equation}
Substituting \eqref{E:2.3} into \eqref{E:2.8} with $\nu = -i\lambda - \frac12$, we obtain the integral formula
\begin{equation}
\label{E:2.5b}
v^*(x) = \frac{e^{-\frac14 ix^2}}{\Gamma(i\lambda + \frac12)} \int_0^\infty e^{-\frac12 t^2} e^{- \big( \frac{1+i}{\sqrt{2}}\big) tx} \, t^{i\lambda- \frac12} \, dt\,, \qquad \Im \lambda<\frac12
\end{equation}

Now we evaluate the asymptotics of $v(x)$ given by \eqref{E:2.5} as $x\to +\infty$.  Taking $s= e^{-i\pi/4} x t$ (which amount to a contour change that is valid for $x>0$), we obtain
\begin{equation}
\label{E:2.10-}
v(x) = \frac{ x^{i\lambda-\frac12}  e^{\frac14 ix^2} e^{\pi \lambda/4} e^{i\pi/8} }{ \Gamma( -i\lambda+\frac12)} v_+(x)\end{equation}
where
\begin{align*}
v_+(x) &= \int_0^\infty e^{-\frac{is^2}{2x^2}} e^{-s} s^{-i\lambda-\frac12} \, ds \\
&= \int_0^\infty e^{-s}e^{-i\lambda-\frac12}\, ds + \int_0^{\infty} (e^{-\frac{is^2}{2x^2}}-1) e^{-s} s^{-i\lambda-\frac12} \, ds\\
&= \Gamma(-i\lambda+\tfrac12) + v_{+,1}(x) 
\end{align*}
Moreover, we have the crude estimate
$$|v_{+,1}(x)| \leq \int_0^{+\infty} \frac{s^2}{2x^2} e^{-s} s^{\Im \lambda - \frac12} \, ds=\tfrac12 x^{-2} \Gamma(\Im \lambda +\tfrac52)$$
Thus we have\footnote{This agrees with the result in the table on p. 29, top entry, in \cite{MR1398655}.}
\begin{equation}
\label{E:2.10}
v(x) \sim x^{i\lambda-\frac12} e^{\frac14 ix^2} e^{\pi \lambda/4} e^{i\pi/8}\left(1+ \frac{ \Gamma(\Im \lambda + \frac52)}{\Gamma(-i\lambda+\frac12)}O(x^{-2})\right) \quad \text{as }x\to +\infty
\end{equation}
uniformly in $\lambda$.    In the case where $\lambda = -h^{-1}-i\sigma$ with $0<h\ll 1$ and $0<\sigma<\frac12$, we have $|\Gamma(-i\lambda+\frac12)| \approx \pi h^\sigma e^{-\pi/(2h)}$ and thus $\Gamma(\Im \lambda+\frac52)/\Gamma(-i\lambda+\frac12) \sim h^{-\sigma} e^{\pi/2h}$, from which it follows that we need $x \gg h^{2\sigma} e^{\pi/(4h)}$ in order for \eqref{E:2.10} to apply.   A more precise result is given in Lemma \ref{L:asymp} below, showing that the leading order term in \eqref{E:2.10} is valid even for $x \gg 2h^{-1/2}$.  

A similar calculation applied to $v^*(x)$ given by \eqref{E:2.5b}  yields
\begin{equation}
\label{E:2.11}
v^*(x) \sim  x^{-i\lambda-\frac12} e^{-\frac14 ix^2} e^{\pi \lambda/4} e^{-i\pi/8}\left(1+\frac{\Gamma(-\Im \lambda+\frac52)}{\Gamma(i\lambda+\frac12)} O(x^{-2})\right) \quad \text{as }x\to +\infty 
\end{equation}
uniformly in $\lambda$.  

Now we evaluate the asymptotics of $v(x)$ given by \eqref{E:2.5} as $x\to -\infty$.   By \eqref{E:2.6b},
\begin{equation}
\label{E:2.9}
v(x) = -ie^{-\pi \lambda} v(-x) + \frac{\sqrt{2\pi}}{\Gamma(-i\lambda+\frac12)} e^{ -\frac{\pi \lambda}{2}} e^{i\pi/4} v^*(-x)
\end{equation}
By \eqref{E:2.6} with $\nu = -i\lambda-\tfrac12$, $-\nu-1=i\lambda-\tfrac12$, $z=e^{i\pi/4}x$, $iz=-e^{-i\pi/4}x$, we obtain
\begin{equation}
\label{E:2.9b}
v^*(x) = ie^{-\pi\lambda} v^*(-x) + e^{-\frac14 i \pi} e^{-\frac12 \pi \lambda} \frac{\sqrt{2\pi}}{\Gamma(i\lambda+\tfrac12)} v(-x)
\end{equation}

We can use \eqref{E:2.9}, \eqref{E:2.10}, \eqref{E:2.11} to obtain\footnote{This agrees with the second row of the table on p. 29 of \cite{MR1398655}.  Here we take $z= e^{3\pi i/4}x$}
\begin{equation}
\label{E:v-left-asymp}
\begin{aligned}
v(x) \sim  &-i e^{\frac{\pi}{8}i} e^{-\frac{3\pi \lambda}{4}} (-x)^{i\lambda-\frac12} e^{\frac14ix^2} \\
& \quad + \frac{\sqrt{2\pi}}{\Gamma(-i\lambda+\frac12)} e^{\frac{\pi}{8}i} e^{-\frac{\pi \lambda}{4}} (-x)^{-i\lambda - \frac12} e^{-\frac14 ix^2} 
\end{aligned}
\quad \text{as }x\to -\infty
\end{equation}
Although these asymptotics were derived under the constraint that $-\frac12 < \Im \lambda < \frac12$, they continue to all $\lambda \in \mathbb{C}$ by analytic continuation.


We also note that
\begin{equation}
\label{E:2.49}
v(0) = D_{i\lambda-\frac12}(0)= \frac{ \sqrt{\pi} 2^{\frac12 i \lambda - \frac14}}{\Gamma( \frac34 - \frac12 i \lambda)}
\end{equation}
\begin{equation}
\label{E:2.50}
v'(0) = e^{-i\pi/4}D_{i\lambda-\frac12}'(0) = - e^{-i\pi/4} \frac{\sqrt{\pi} 2^{\frac12 i \lambda + \frac14}}{\Gamma(-\frac12 i \lambda + \frac14)}
\end{equation}
\begin{equation}
\label{E:2.53}
v^*(0) = D_{-i\lambda-\frac12}(0)= \frac{ \sqrt{\pi} 2^{-\frac12 i \lambda - \frac14}}{\Gamma( \frac34 + \frac12 i \lambda)}
\end{equation}
\begin{equation}
\label{E:2.54}
(v^*)'(0) = e^{i\pi/4}D_{-i\lambda-\frac12}'(0) = - e^{i\pi/4}\frac{\sqrt{\pi} 2^{-\frac12 i \lambda + \frac14}}{\Gamma(\frac12 i \lambda + \frac14)}
\end{equation}

We conclude with some Wronskians. First
$$W[v(x),v(-x)] = 2v'(0)v(0) = - \frac{ e^{-i\pi/4} \pi \, 2^{i\lambda+1}}{\Gamma(\frac14-\frac12i\lambda)\Gamma(\frac34-\frac12i\lambda)}$$
Using the identity $\sqrt \pi 2^{1-2z} \Gamma(2z) = \Gamma(z)\Gamma(z+\frac12)$ with $z=\frac14-\frac12i\lambda$, this simplifies to
$$W[v(x),v(-x)] = - \frac{ e^{-i\pi/4} \sqrt{2\pi}}{\Gamma(\frac12-i\lambda)}$$

Now consider
\begin{align*}
W[v(x),v^*(x)] &= e^{-i\pi/4} W[ D_\nu(z), D_{-\nu-1}(iz)]  \\
&= e^{-i\pi/4} W[D_{-\nu-1}(-iz),D_\nu(-z)] = e^{-i \pi/4} W[\gamma_3,\gamma_2]
\end{align*}
where $z= e^{-i\pi/4}x$ and $\nu = i\lambda-\frac12$.  Substituting \eqref{E:Wronsk}, we obtain
\begin{equation}
\label{E:2.47}
W[v(x),v^*(x)] = i e^{\pi\lambda/2}
\end{equation}
  
Now that we have laid out the basic properties of the fundamental solutions $v(x,\lambda)$ and $v^*(x,\lambda)$ of \eqref{E:2.1}, we consider the scaled Hamiltonian, and associated eigenvalue problem
$$-\partial_x^2 w - \tfrac14 h^2 x^2 w = h \lambda w$$
Two solutions are given by
\begin{equation}
\label{E:w-wstar}
w(x,\lambda) = v(h^{1/2} x, \lambda) \,, \qquad w^*(x, \lambda) = v^*(h^{1/2}x,\lambda)
\end{equation}
The basic properties of $w(x,\lambda)$ and $w^*(x,\lambda)$ are easily deduced from the corresponding properties for 
$v(x,\lambda)$ and $v^*(x,\lambda)$ given above.

\section{Outgoing solutions and the Pohozhaev identities}
\label{S:Pohozhaev}

In this section, we prove Theorem \ref{T:Pohozhaev}, that is, we derive the Pohozhaev identities for finite energy solutions $\eta(z)$ of \eqref{E:blow-up-profile1}, for $\sigma<1$, and deduce some consequences.  By the analysis given in \S \ref{S:introduction}, any such $\eta(z)$ has the $|z|\to \infty$ asymptotics given by \eqref{E:eta-asymp} with $\lambda = -\kappa h^{-1} - i\sigma$.  The profile equation \eqref{E:blow-up-profile2} is equivalent to 
\begin{equation}
\label{E:eta-reform}
(\kappa +ih\sigma) \eta -ih\Lambda \eta - \eta_{zz} =0 \qquad \text{for }z\neq 0
\end{equation}
together with the juncture conditions at $z=0$ given by
\begin{equation}
\label{E:eta-juncture}
\begin{aligned}
& \eta(0) = \eta(0-) = \eta(0+) \\
& \eta_z(0+)-\eta_z(0-) = -|\eta(0)|^{p-1}\eta(0)
\end{aligned}
\end{equation}
Also since $\eta(z) = \alpha w(|z|,\lambda)$, for some $\alpha$, where $w$ is smooth across $z=0$, we calculate
\begin{align*}
&\eta_z(z) = \alpha w'(|z|) \sgn z \\
&\eta_{zz}(z) = \alpha w''(|z|) + 2\alpha w'(0) \delta(z) \\
& \eta_{zzz}(z) = \alpha w'''(|z|)\sgn z + 2\alpha w'(0) \delta'(z) 
\end{align*}
which shows that the left-hand and right-hand limits for all derivatives exist and in particular
$$\eta_{zz}(0) \defeq \eta_{zz}(0-)=\eta_{zz}(0+)$$
Recall that we also derived the asymptotics \eqref{E:eta-asymp}.  
These properties are used to derive the Pohozhaev identities below.

  Pair \eqref{E:eta-reform} with $\bar \eta$, and integrate over $-R< z < R$ to obtain
\begin{equation}
\label{E:Poh1}
(\kappa+ih\sigma) \int_{-R}^R |\eta|^2 - i h \int_{-R}^R \Lambda \eta \, \bar \eta - \int_{-R}^R \eta_{zz} \bar \eta =0
\end{equation}
Take the real part of \eqref{E:Poh1}, using that $-\Re \eta_{zz} \bar \eta = -\frac12 \partial_z^2 |\eta|^2 + |\eta_z|^2$, applying the fundamental theorem of calculus on $(0,R)$ and $(-R,0)$,
$$
0 = 
\begin{aligned}[t]
&\kappa \int_{-R}^R |\eta|^2 + h \Im \int_{-R}^R \Lambda \eta \, \bar \eta  + \int_{-R}^R |\eta_z|^2 
 \\
 &- \frac12 \left[ \partial_z |\eta|^2 \right]_{-R}^R + \Re \overline{\eta(0)}(\eta'(0+)-\eta'(0-))
 \end{aligned}
 $$
Using \eqref{E:eta-juncture} across $z=0$,
$$
0 = 
\kappa \int_{-R}^R |\eta|^2 + h \Im \int_{-R}^R \Lambda \eta \, \bar \eta  + \int_{-R}^R |\eta_z|^2 
- \frac12 \left[ \partial_z |\eta|^2 \right]_{-R}^R -|\eta(0)|^{p+1}
  $$
 By \eqref{E:eta-asymp}, $\Big[ \partial_z |\eta|^2 \Big]_{-R}^R = O_{\lambda,h}(R^{2\sigma-2})$, $\int_{-R}^R |\eta_z|^2 = \int_{-\infty}^{+\infty} |\eta_z|^2 + O_{\lambda,h}(R^{2\sigma-2})$.  This gives
 \begin{equation}
 \label{E:Pohozhaev1}
  0 = \kappa \int_{-R}^R |\eta|^2 + h \Im \int_{-R}^R \Lambda \eta \; \bar \eta + \int_{-\infty}^{+\infty} |\eta_z|^2 - |\eta(0)|^{p+1} + O_{\lambda,h}(R^{2\sigma-2})
  \end{equation}
  
Take the imaginary part of \eqref{E:Poh1}, using that $\Im (\eta_z \bar \eta)_z = \Im ( \eta_{zz}\bar \eta)$ and $\Re \Lambda \eta \, \bar \eta = \frac12 ( z|\eta|^2)_z$, to obtain
$$
0 = h\sigma \int_{-R}^R |\eta|^2 - \frac12 h \int_{-R}^R \partial_z(z|\eta|^2) - \Im \int_{-R}^R \partial_z(\eta_z \bar \eta) $$
Applying the fundamental theorem on $(-R,0)$ and $(0,R)$, we obtain
$$ 0 = h\sigma \int_{-R}^R |\eta|^2 - \frac12 h \left[ z|\eta|^2 \right]_{-R}^R - \Im \left[ \eta_z \bar \eta \right]_{-R}^R + \Im (\eta_z(0+)-\eta_z(0-)) \overline{\eta(0)} $$
By \eqref{E:eta-juncture},
$$ 0 = h\sigma \int_{-R}^R |\eta|^2 - \frac12 h \left[ z|\eta|^2 \right]_{-R}^R - \Im \left[ \eta_z \bar \eta \right]_{-R}^R $$
By \eqref{E:eta-asymp}, $\eta_z \bar\eta \sim \overline{c_{0,\lambda,h}}c_{1,\lambda,h} |z|^{2\sigma-2}  \sgn z$, so $- \Big[ \Im \eta_z \bar \eta \Big]_{-R}^R = O_{\lambda,h} (R^{2\sigma-2})$.  Additionally from \eqref{E:eta-asymp}, we have $|\eta(z)|^2 \sim |c_{0,\lambda,h}|^2 |z|^{2\sigma-1}(1+O(z^{-2}))$, so  $-\frac12 h \Big[ z |\eta(z)|^2 \Big]_{-R}^R = -h |c_{0,\lambda,h}|^2 R^{2\sigma}+O(R^{2\sigma-2})$.  Hence
\begin{equation}
\label{E:Pohozhaev2}
0 = \sigma \int_{-R}^R |\eta|^2 - |c_{0,\lambda,h}|^2 R^{2\sigma} + O_{\lambda,h}(R^{2\sigma-2})
\end{equation}

Taking $R$ sufficiently large, we see from \eqref{E:Pohozhaev2} that $\sigma>0$ for a nontrivial solution.

Next multiply \eqref{E:eta-reform} by $\Lambda \bar \eta$,  and take the real part to obtain
$$\kappa \Re (\eta \Lambda \bar \eta) - h\sigma \Im (\eta \Lambda \bar\eta) - \Re(\eta_{zz} \Lambda \bar \eta) =0 \,, \qquad z\neq 0$$
Substituting the identities
$$\Re [ \eta \Lambda \bar \eta] = \frac12 \partial_z (z |\eta|^2)$$
$$ \Re [\eta_{zz} \Lambda \bar \eta] = \frac12 \Re (\eta_z \bar \eta)_z - |\eta_z|^2 + \frac12 (z|\eta_z|^2)_z$$
we obtain
$$\partial_z \left( \frac{\kappa}{2} z |\eta|^2 - \frac12 \Re(\eta_z \bar \eta) - \frac12 z |\eta_z|^2 \right) - h\sigma \Im (\eta\Lambda \bar \eta) + |\eta_z|^2 =0$$
Summing the integral over $-R<z<0$ and the integral over $0<z<R$, and using the fundamental theorem on each integral, we obtain
$$0=
\begin{aligned}[t]
& \Big[ \frac{\kappa}2  z|\eta|^2 - \frac12 \Re(\eta_z\bar\eta) - \frac12 z|\eta_z|^2\Big]_{-R}^R -h\sigma \Im \int_{-R}^R \eta \Lambda \bar \eta \\
&+ \int_{-R}^R |\eta_z|^2 + \frac12 \Re (\eta_z(0+)-\eta_z(0-)) \bar \eta(0) 
\end{aligned}
$$

By \eqref{E:eta-asymp}, $\eta_z \bar\eta \sim \overline{c_{0,\lambda,h}}c_{1,\lambda,h} |z|^{2\sigma-2}  \sgn z$, so $\Big[ \Re (\eta_z \bar \eta) \Big]_{-R}^R = O_{\lambda,h} (R^{2\sigma-2})$.  Additionally from \eqref{E:eta-asymp}, we have $z|\eta_z|^2 \sim |c_{1,\lambda,h}|^2 |z|^{2\sigma-2}(\sgn z)$, so $\Big[ z|\eta_z|^2\Big]_{-R}^R = O_{\lambda,h}(R^{2\sigma-2})$.  Also,  $z|\eta|^2 = |c_{0,\lambda,h}|^2 |z|^{2\sigma} (\sgn z)(1+O(z^{-2}))$, so $\Big[ z|\eta|^2 \Big]_{-R}^R  = 2|c_{0,\lambda,h}|^2 R^{2\sigma}+O_{\lambda,h}(R^{2\sigma-2})$

Substituting yields
\begin{equation}
\label{E:Pohozhaev3}
0 = \kappa |c_{0,\lambda,h}|^2 R^{2\sigma} - h\sigma \Im \int_{-R}^R \eta \Lambda \bar \eta + \int_{-R}^R |\eta_z|^2 - \frac12 |\eta(0)|^{p+1} + O(R^{2\sigma-2})
\end{equation}

From \eqref{E:Pohozhaev1}, we obtain
\begin{equation}
\label{E:Pohozhaev4}
\lim_{R\to +\infty} R^{-2\sigma} \left( \kappa \int_{-R}^R |\eta|^2 - h \Im \int_{-R}^R \eta \Lambda \bar \eta \right) =0
\end{equation}
From \eqref{E:Pohozhaev2}, we obtain
\begin{equation}
\label{E:Pohozhaev5}
\lim_{R\to +\infty} R^{-2\sigma} \sigma \int_{-R}^R |\eta|^2  = |c_{0,\lambda,h}|^2
\end{equation}
From \eqref{E:Pohozhaev3}, we obtain
\begin{equation}
\label{E:Pohozhaev6}
\lim_{R\to +\infty} R^{-2\sigma} h\sigma \Im \int_{-R}^R \eta \Lambda \bar \eta = \kappa |c_{0,\lambda,h}|^2
\end{equation}
By \eqref{E:Pohozhaev5}, we find that $\sigma>0$.  This combined with \eqref{E:Pohozhaev6} gives that 
$$\lim_{R\to +\infty} R^{-2\sigma} \Im \int_{-R}^R \eta \Lambda \bar \eta >0$$
This combined with \eqref{E:Pohozhaev4} gives that $\kappa>0$.

Plugging \eqref{E:Pohozhaev2} into \eqref{E:Pohozhaev3}, we obtain
\begin{equation}
\label{E:Pohozhaev7}
0 = \kappa \sigma \int_{-R}^R |\eta|^2 - h \sigma \Im \int_{-R}^R \eta \Lambda \bar \eta + \int_{-\infty}^{+\infty} |\eta_z|^2 - \frac12 |\eta(0)|^{p+1} +O_{\lambda,h}(R^{2\sigma-2})
\end{equation}
Taking \eqref{E:Pohozhaev7} and subtracting $\sigma$ times \eqref{E:Pohozhaev1} and sending $R\to \infty$, we obtain
$$0 = (1-\sigma) \int_{-\infty}^{+\infty} |\eta_z|^2 - \left( \frac12-\sigma\right) |\eta(0)|^{p+1}$$
Substituting,
$$E(\eta) = \frac12 \int |\eta_z|^2 - \frac{1}{p+1} |\eta(0)|^{p+1} = \left( \frac12 - \frac{2}{p+1}\cdot \frac{1-\sigma}{1-2\sigma}\right) \int |\eta_z|^2 $$
If $\sigma=\sigma_c=\frac12-\frac{1}{p-1}$, then note that $E(\eta)=0$.

\section{Existence and uniqueness of profiles}
\label{S:profiles}

In this section, we prove Theorem \ref{T:profiles-kappa+1}.    From the definition of $A(\lambda)$ given in \eqref{E:solve-condition}, and the definition of $w$ in terms of $v$ given in \eqref{E:w-wstar},
$$ A(\lambda) \defeq \frac{- v'(0,\lambda)}{v(0,\lambda)}$$
where $\lambda = -\kappa h^{-1}-i\sigma$, with $\kappa \in \{-1,1\}$.   From Theorem \ref{T:Pohozhaev}, we know that $0<\sigma<1$.    Substituting the explicit formulae given in \eqref{E:2.49} and \eqref{E:2.50}, we obtain
\begin{equation}
\label{E:A}
A(\lambda) = \frac{ e^{-i\pi/4} \sqrt{2} \Gamma( \frac34-\frac12i\lambda)}{\Gamma(\frac14-\frac12i\lambda)}
\end{equation}
Substituting $\lambda = -\kappa h^{-1}-i\sigma$, 
\begin{align*}
A(\lambda) &=  \frac{ e^{-i\pi/4} \sqrt{2} \Gamma( \frac34-\frac12\sigma +\frac12i\kappa h^{-1})}{\Gamma(\frac14-\frac12\sigma + \frac12i\kappa h^{-1})} \\
&= \frac{ e^{-i\pi/4} \sqrt{2} (-\frac14-\frac12\sigma+\frac12i\kappa h^{-1}) \Gamma(-\frac14 -\frac12\sigma + \frac12i \kappa h^{-1})}{\Gamma(\frac14-\frac12\sigma + \frac12 i \kappa h^{-1})} \\
& = \frac{ e^{-i\pi/4} \sqrt{2} (-\frac14-\frac12\sigma+\frac12i\kappa h^{-1}) \Gamma(-\frac14 -\frac12\sigma + \frac12i \kappa h^{-1})\Gamma(\frac14-\frac12\sigma-\frac12i\kappa h^{-1})}{|\Gamma(\frac14-\frac12\sigma + \frac12 i \kappa h^{-1})|^2}
\end{align*}
$A(\lambda)$ is real and positive if and only if $B(\lambda)$ is real and positive, where
\begin{equation}
\label{E:1-140}
\begin{aligned}
B(\lambda) &\defeq \frac{A(\lambda) |\Gamma(\frac14-\frac12\sigma+\frac12i\kappa h^{-1})|^2}{\sqrt{2}} \\
&= e^{-i\pi/4} (-\tfrac14-\tfrac12\sigma+\tfrac12i\kappa h^{-1}) \Gamma(-\tfrac14-\tfrac12\sigma+\tfrac12i\kappa h^{-1}) \Gamma(\tfrac14-\tfrac12\sigma-\tfrac12i\kappa h^{-1})
\end{aligned}
\end{equation}
For $\sigma=0$, we would use
\begin{equation}
\label{E:1-138}
-z\Gamma(-z)\Gamma(z) = \frac{\pi}{\sin \pi z}
\end{equation}
with $z = \frac14-\frac12i\kappa h^{-1}$ to simplify the expression for $B(\lambda)$.  Although $\sigma\neq 0$, it is expected to be exponentially small in $h$ as $h\to 0$, so we still use \eqref{E:1-138} with $z= \frac14+\frac12\sigma-\frac12i\kappa h^{-1}$ to obtain
\begin{equation}
\label{E:1-139}
\begin{aligned}
\indentalign (-\tfrac14 - \tfrac12 \sigma + \tfrac12 i \kappa h^{-1}) \Gamma(-\tfrac14 - \tfrac12 \sigma + \tfrac12 i \kappa h^{-1}) \\
&=-z\Gamma(-z) \\
&= \frac{\pi}{(\sin \pi z) \Gamma(z)}\\
&= \frac{\pi}{\sin \pi(\frac14+\frac12\sigma-\frac12i\kappa h^{-1}) \Gamma(\frac14+\frac12\sigma - \frac12i \kappa h^{-1})}
\end{aligned}
\end{equation}
Now
$$
\sin \pi(\tfrac14+\tfrac12\sigma-\tfrac12 i \kappa h^{-1}) = -i \frac12( e^{i\pi/4} e^{\frac12i\sigma\pi} e^{\frac12 \kappa h^{-1}\pi} - e^{-i\pi/4} e^{-\frac12i\sigma \pi} e^{-\frac12\kappa h^{-1}\pi}) $$
Hence
$$\sin \pi(\tfrac14+\tfrac12\sigma-\tfrac12 i \kappa h^{-1})  = 
\begin{cases}
\tfrac12 e^{-i\pi/4} e^{\frac12i\sigma\pi} e^{\pi/2h} ( 1+ i e^{-i\sigma \pi} e^{-\pi/h}) & \text{if }\kappa=+1\\
\tfrac12 e^{i\pi/4} e^{-\frac12i\sigma\pi} e^{\pi/2h} ( 1- i e^{i\sigma \pi} e^{-\pi/h}) & \text{if }\kappa=-1
\end{cases}
$$
Substituting into \eqref{E:1-139},
\begin{align*}
\indentalign (-\tfrac14 - \tfrac12 \sigma + \tfrac12 i \kappa h^{-1}) \Gamma(-\tfrac14 - \tfrac12 \sigma + \tfrac12 i \kappa h^{-1}) \\
&= \left\{\begin{aligned}
&\frac{ 2\pi e^{i\pi/4} e^{-\frac12 i \sigma \pi} e^{-\pi/2h}}{(1+ie^{-i\sigma \pi} e^{-\pi/h})\Gamma(\frac14+\frac12\sigma-\frac12i h^{-1})} && \text{if }\kappa=+1\\
&\frac{ 2\pi e^{-i\pi/4} e^{\frac12 i \sigma \pi} e^{-\pi/2h}}{(1-ie^{i\sigma \pi} e^{-\pi/h})\Gamma(\frac14+\frac12\sigma+\frac12i h^{-1})} && \text{if }\kappa=-1
\end{aligned} \right.
\end{align*}
Substituting into \eqref{E:1-140},
$$
B(\lambda) = \left\{ \begin{aligned} \frac{ 2\pi e^{-\frac12i\sigma \pi} e^{-\pi/2h}}{1+ i e^{-i\sigma \pi} e^{-\pi/h}} \frac{ \Gamma( \frac14 - \frac12 \sigma - \frac12 i h^{-1})}{\Gamma( \frac14 + \frac12\sigma -\frac12 i h^{-1})}  && \text{if }\kappa=+1\\
\frac{ -2\pi i e^{\frac12 i \sigma \pi} e^{-\pi/2h}}{1-ie^{i\sigma \pi} e^{-\pi/h}}\frac{\Gamma(\frac14-\frac12\sigma+\frac12ih^{-1})}{\Gamma(\frac14+\frac12\sigma+\frac12i h^{-1})} && \text{if }\kappa=-1
\end{aligned}\right.
$$
Combining with \eqref{E:1-140}, we have
\begin{equation}
\label{E:B}
A(\lambda) = \frac{\sqrt{2}}{|\Gamma(\frac14-\frac12\sigma+\frac12i\kappa h^{-1})|^2} \left\{ \begin{aligned} \frac{ 2\pi e^{-\frac12i\sigma \pi} e^{-\pi/2h}}{1+ i e^{-i\sigma \pi} e^{-\pi/h}} \frac{ \Gamma( \frac14 - \frac12 \sigma - \frac12 i h^{-1})}{\Gamma( \frac14 + \frac12\sigma -\frac12 i h^{-1})}  && \text{if }\kappa=+1\\
\frac{ -2\pi i e^{\frac12 i \sigma \pi} e^{-\pi/2h}}{1-ie^{i\sigma \pi} e^{-\pi/h}}\frac{\Gamma(\frac14-\frac12\sigma+\frac12ih^{-1})}{\Gamma(\frac14+\frac12\sigma+\frac12i h^{-1})} && \text{if }\kappa=-1
\end{aligned}\right.
\end{equation}
Thus we have two expressions for $A(\lambda)$ given by \eqref{E:A} and \eqref{E:B}. 
Now let 
\begin{equation}
\label{E:f-def}
f(\sigma,h^{-1}) \defeq \Im \log A(\lambda)
\end{equation}
In either case $\kappa=\pm 1$,  there exists an outgoing solution $\varphi$ if and only if $f(\sigma,h^{-1}) \in 2\pi \mathbb{Z}$.   From \eqref{E:f-def} and \eqref{E:A}, we have
\begin{equation}
\label{E:f-1}
f(\sigma,h^{-1}) = -\tfrac14\pi + \Im \log \Gamma( \tfrac34-\tfrac12\sigma+\tfrac12i\kappa h^{-1}) - \Im \log \Gamma(\tfrac14-\tfrac12\sigma + \tfrac12i \kappa h^{-1})
\end{equation}
From \eqref{E:f-def} and \eqref{E:B}, we have
\begin{equation}
\label{E:1-142}
\begin{aligned}
f(\sigma, h^{-1}) = 
\begin{cases}
-\tfrac12 \sigma \pi  - \Im \log(1+ ie^{-i\sigma \pi}e^{-\pi/h}) \\
+\Im \log \Gamma( \tfrac14 - \tfrac12 \sigma - \tfrac12 i h^{-1}) - \Im \log \Gamma( \tfrac14 + \tfrac12 \sigma - \tfrac12i h^{-1}) & \text{if }\kappa=+1 \\
\;\\
-\frac{\pi}{2} +\tfrac12 \sigma \pi  - \Im \log(1- ie^{i\sigma \pi}e^{-\pi/h}) \\
+\Im \log \Gamma( \tfrac14 - \tfrac12 \sigma + \tfrac12 i h^{-1}) - \Im \log \Gamma( \tfrac14 + \tfrac12 \sigma + \tfrac12i h^{-1}) & \text{if }\kappa=-1 
\end{cases}
\end{aligned}
\end{equation}

Let us comment on the branches of the logarithm in \eqref{E:f-1} and \eqref{E:1-142}.  The terms involving $\log \Gamma(z)$ for certain $z$ are defined as follows.  By the Weierstrass product representation, $\Gamma(z)$ is analytic on $\mathbb{C} \backslash \{0,-1,-2, \ldots \}$ and \emph{nonvanishing}, with poles at $0,-1,-2, \ldots$.    We thus restrict $\log \Gamma(z)$ to the simply connected domain $\mathbb{C} \backslash \text{(negative real axis)}$, and fix it to be the analytic continuation that results from assigning $\log \Gamma(1) = 0$.   If we restrict to $0<\sigma<\frac12$, then each input value $z$ of $\log \Gamma(z)$ appearing in \eqref{E:f-1} and \eqref{E:1-142} belongs to $\mathbb{C} \backslash \text{(negative real axis)}$, allowing for $h^{-1}=0$.   In the case $\frac12 \leq \sigma <1$, we restrict to $h^{-1} >0$ but can still assign values to $\log \Gamma(z)$ for $h^{-1}=0$ for each input $z$ in   \eqref{E:f-1} and \eqref{E:1-142} by taking the limit $h^{-1} \searrow 0$.   In \eqref{E:1-142}, there is an additional term with a logarithm.  Since $|ie^{-i\sigma \pi}e^{-\pi/h}|<1$, we have $\Re( 1\pm i e^{-i\sigma \pi} e^{-\pi/h})>0$, and the function $\log(1+ ie^{-i\sigma \pi}e^{-\pi/h})$ is taken as the branch of $\log(w)$ such that $\log w$ is real for $w$ real.

\begin{lemma}
\label{L:profiles-kappa+1}
For $0<\sigma<1$, $h>0$, $\kappa=1$, we have $-\frac{\pi}{2}<f(0,h^{-1})<0$ and $0<f(1, h^{-1})<\pi$ for all $0<h^{-1}<\infty$, and $\partial_\sigma f(\sigma, h^{-1})>0$ for all $0<\sigma<1$ and $0<h^{-1}<\infty$.  Hence for each $0<h^{-1}<\infty$, there exists a unique $0<\sigma(h)<1$ such that $f(\sigma(h), h^{-1})=0$ and there are no solutions to $f(\sigma,h^{-1})=2\pi n$ for $n\neq 0$.
\end{lemma}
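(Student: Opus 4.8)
The plan is to fix $\kappa=1$ and regard $f$ as a function of $\sigma$ and $y\defeq\tfrac12 h^{-1}\in(0,\infty)$ through the representation \eqref{E:f-1}. The proof splits into three independent ingredients: strict monotonicity $\partial_\sigma f>0$, the two endpoint estimates, and a short packaging step using the intermediate value theorem.

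For the monotonicity I would differentiate \eqref{E:f-1}. Writing $\psi\defeq(\log\Gamma)'$ for the digamma function, this gives
$$\partial_\sigma f = \tfrac12\bigl[\Im\psi(\tfrac14-\tfrac12\sigma+iy)-\Im\psi(\tfrac34-\tfrac12\sigma+iy)\bigr].$$
Inserting the Mittag--Leffler series $\Im\psi(a+iy)=\sum_{n\ge0}\frac{y}{(n+a)^2+y^2}$ and setting $a_1=\tfrac14-\tfrac12\sigma$, $a_2=a_1+\tfrac12$, each summand equals $\frac{y}{(n+a_1)^2+y^2}-\frac{y}{(n+a_2)^2+y^2}$, which is positive exactly when $|n+a_1|<|n+a_2|$. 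For $0<\sigma<1$ one has $a_2>\tfrac14>0$, so it suffices to verify $|n+a_1|<n+a_2$: this is immediate for $n\ge1$, while for $n=0$ the gap $a_2-|a_1|$ equals $\tfrac12$ when $\sigma\le\tfrac12$ and $1-\sigma$ when $\sigma>\tfrac12$, hence is strictly positive on $(0,1)$. Thus $\partial_\sigma f>0$ for every $y>0$.

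The endpoint bounds hinge on the single quantity $F(y)\defeq\Im\log\Gamma(\tfrac34+iy)-\Im\log\Gamma(\tfrac14+iy)$, since $f(0,h^{-1})=-\tfrac\pi4+F(y)$. I would compute $F$ in closed form using the reflection identity \eqref{E:2.7} at $z=\tfrac14+iy$: because $\Gamma(\tfrac34-iy)=\overline{\Gamma(\tfrac34+iy)}$,
$$\frac{\Gamma(\tfrac34+iy)}{\Gamma(\tfrac14+iy)}=\frac{|\Gamma(\tfrac34+iy)|^2}{\pi}\,\sin\pi(\tfrac14+iy),\qquad \sin\pi(\tfrac14+iy)=\tfrac1{\sqrt2}\bigl(\cosh\pi y+i\sinh\pi y\bigr),$$
so the ratio has argument $\arctan(\tanh\pi y)$. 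Since $F(y)-\arctan(\tanh\pi y)$ is continuous in $y$, takes values in $2\pi\mathbb{Z}$, and vanishes at $y=0$ (under the branch convention fixed after \eqref{E:1-142}), it vanishes identically, giving $F(y)=\arctan(\tanh\pi y)\in(0,\tfrac\pi4)$. Hence $f(0,h^{-1})\in(-\tfrac\pi4,0)\subset(-\tfrac\pi2,0)$. At $\sigma=1$ the arguments in \eqref{E:f-1} are $\tfrac14+iy$ and $-\tfrac14+iy$; applying $\Gamma(-\tfrac14+iy)=\Gamma(\tfrac34+iy)/(-\tfrac14+iy)$ (legitimate since $-\tfrac14+iy$ lies off the slit $(-\infty,0]$ for $y>0$) together with $\arg(-\tfrac14+iy)=\pi-\arctan(4y)$ yields $f(1,h^{-1})=\tfrac{3\pi}4-\arctan(4y)-\arctan(\tanh\pi y)$, whose two subtracted terms lie in $(0,\tfrac\pi2)$ and $(0,\tfrac\pi4)$ respectively; hence $f(1,h^{-1})\in(0,\tfrac{3\pi}4)\subset(0,\pi)$.

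Finally, $f(\cdot,h^{-1})$ is continuous and, by the above, strictly increasing on $[0,1]$ with $f(0,h^{-1})<0<f(1,h^{-1})$, so the intermediate value theorem produces a unique $\sigma(h)\in(0,1)$ with $f(\sigma(h),h^{-1})=0$. Moreover the entire range $f\bigl((0,1),h^{-1}\bigr)\subset(-\tfrac\pi4,\tfrac{3\pi}4)$ contains no nonzero multiple of $2\pi$, so $f(\sigma,h^{-1})=2\pi n$ is unsolvable for $n\neq0$. I expect the crux to be the identity $F(y)=\arctan(\tanh\pi y)$: pinning down its exact value (not merely its value modulo $2\pi$) requires the reflection computation together with the continuity argument above, and the recursion used at $\sigma=1$ must likewise be reconciled with the fixed branch of $\log\Gamma$, since there the real part of the argument is negative.
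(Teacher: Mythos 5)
Your proposal is correct and follows essentially the same route as the paper's proof: endpoint values of $f$ via the reflection formula \eqref{E:2.7} and the recursion $\Gamma(z+1)=z\Gamma(z)$ with careful branch tracking, strict monotonicity via the Mittag--Leffler series for the digamma difference (your termwise comparison of $\Im\psi(\tfrac14-\tfrac12\sigma+iy)$ and $\Im\psi(\tfrac34-\tfrac12\sigma+iy)$ is the paper's computation of $\Im(\psi(z)-\psi(z+\tfrac12))$ in a different guise), followed by the intermediate value theorem. The only difference is bookkeeping: the paper reads off $f(0,h^{-1})=-\Im\log(1+ie^{-\pi/h})$ and $f(1,h^{-1})=-\tfrac{\pi}{2}-\Im\log(1-ie^{-\pi/h})-\Im\log(-\tfrac14-\tfrac12 ih^{-1})$ from its pre-derived formula \eqref{E:1-142}, fixing the branch by evaluation at the test point $(\sigma,h^{-1})=(\tfrac72,0)$, whereas you work directly from \eqref{E:f-1} and obtain the equivalent closed forms $-\tfrac{\pi}{4}+\arctan(\tanh\pi y)$ and $\tfrac{3\pi}{4}-\arctan(4y)-\arctan(\tanh\pi y)$ (these agree with the paper's expressions via the identity $\arctan(\tanh u)+\arctan(e^{-2u})=\tfrac{\pi}{4}$), fixing branches by continuity from $y=0$ --- a legitimate, slightly sharper variant of the same argument.
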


\begin{proof}
From \eqref{E:1-142}, we have $f(0,h^{-1}) = -\Im \log(1+ie^{-\pi/h})$, from which it readily follows that $-\frac{\pi}{2} < f(0, h^{-1})< 0$.

 We now turn to evaluating $f(1, h^{-1})$.  Consider the formula
\begin{align}
\indentalign \log \Gamma( \tfrac14+\tfrac12\sigma - \tfrac12 i h^{-1}) = \log ( ( -\tfrac34 + \tfrac12 \sigma - \tfrac12 i h^{-1})\Gamma(-\tfrac34+\tfrac12\sigma-\tfrac12i h^{-1})) 
\notag \\
& = \log ( -\tfrac34 + \tfrac12 \sigma - \tfrac12 i h^{-1}) + \log \Gamma(-\tfrac34+\tfrac12\sigma-\tfrac12i h^{-1})
\label{E:gprod}
\end{align}
in the simply connected domain $(\sigma, h^{-1}) \in \mathbb{R}^2$ excluding $(-\infty, \frac32)\times \{0\}$, but we must specify the branch of $\log ( -\frac34 + \frac12 \sigma - \frac12 i h^{-1})$.    If we consider the point $(\sigma, h^{-1}) = (\frac72, 0)$, then the left side is $\log \Gamma( \frac14+\frac12\sigma - \frac12 i h^{-1})= \log \Gamma(2) = \log 1 =0$, and on the right side, $\log \Gamma(-\frac34+\frac12\sigma-\frac12i h^{-1}) = \log\Gamma(1) = 0$, so we need to take $\log ( -\frac34 + \frac12 \sigma - \frac12 i h^{-1}) = \log 1 =0$ (as opposed to other integer multiples of $2\pi i$).  Hence the branch of $\log ( -\frac34 + \frac12 \sigma - \frac12 i h^{-1})$ in the above formula has imaginary part ranging from $-\pi$ to $+\pi$ (note that  $-\frac34 + \frac12 \sigma - \frac12 i h^{-1}$ on the specified domain takes values in $\mathbb{C}\backslash \text{(negative reals)}$).

Evaluating \eqref{E:gprod} at $\sigma=1$ gives
$$\log\Gamma(\tfrac34-\tfrac12ih^{-1}) = \log( -\tfrac14 - \tfrac12 ih^{-1}) + \log\Gamma(-\tfrac14 - \tfrac12 ih^{-1})$$
Substituting into \eqref{E:1-142} gives
$$f(1,h^{-1}) = -\tfrac12\pi - \Im \log(1-ie^{-\pi/h}) - \Im \log(-\tfrac14-\tfrac12 i h^{-1})$$
Now
$$-\tfrac12\pi < \Im \log(1- ie^{-\pi/h}) < 0 \,, \qquad -\pi < \log( -\tfrac14 - \tfrac12 i h^{-1}) < -\tfrac12\pi$$
so $0< f(1,h^{-1}) < \pi$ as claimed.  

Next, we prove that $\partial_\sigma f(\sigma, h^{-1})>0$.  From \eqref{E:f-1},
\begin{equation}
\label{E:partial-sigma-f}
\begin{aligned}
\partial_\sigma f(\sigma, h^{-1}) &= \tfrac12 \Im  \left( -\psi( \tfrac34-\tfrac12\sigma + \tfrac12ih^{-1})  + \psi( \tfrac14 - \tfrac12\sigma + \tfrac12i h^{-1}) \right) \\
&= \tfrac12 \Im( \psi(z) - \psi(z+\tfrac12))
\end{aligned}
\end{equation}
where $\psi$ is the digamma function and we take $z= \frac14 - \frac12\sigma+\frac12ih^{-1}$.  By taking the log derivative of the Weierstrass product formula for the gamma function, we obtain
$$\psi(z) = -\gamma + \sum_{k=0}^{\infty} \left( \frac{1}{k+1} - \frac{1}{z+k}\right)$$
Hence
$$\psi(z)-\psi(z+\tfrac12) = \sum_{k=0}^{+\infty} \left( -\frac{1}{z+k} +\frac{1}{z+\frac12+k}\right)= -\frac12 \sum_{k=0}^\infty \frac{1}{(z+k)(z+\frac12+k)}$$
For $z=x+iy$, we obtain
$$\Im (\psi(z)-\psi(z+\frac12)) =  \frac12 \sum_{k=0}^\infty \frac{y(2x+\frac12+2k)}{[ (x+k)(x+\frac12+k)-y^2]^2+[ y(2x+\frac12+2k)]^2}$$
In our case $z=\frac14-\frac12\sigma+\frac12ih^{-1}$, so $x=\frac14-\frac12\sigma$ and $y=\frac12h^{-1}$.  Hence
$$y(2x+\tfrac12+2k) = \tfrac12 h^{-1}( 1-\sigma+2k) >0$$
for all $0<h^{-1}<\infty$ and $0<\sigma<1$.  Consequently, from \eqref{E:partial-sigma-f}, we obtain $\partial_\sigma f(\sigma, h^{-1})>0$, as claimed.
\end{proof}

\begin{lemma}
\label{L:profiles-kappa-1}
For $0<\sigma<1$, $h>0$, $\kappa=-1$, we have $-\frac{\pi}{2}<f(0,h^{-1})<0$ and $-\frac{3\pi}{2}<f(1, h^{-1})<-\frac{\pi}{2}$ for all $0<h^{-1}<\infty$, and $\partial_\sigma f(\sigma, h^{-1})<0$ for all $0<\sigma<1$ and $0<h^{-1}<\infty$.  Hence there are no solutions to $f(\sigma, h^{-1}) \in 2\pi \mathbb{Z}$ for $0<\sigma<1$ and $0<h^{-1}<\infty$.
\end{lemma}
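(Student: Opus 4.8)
The plan is to follow the same three-part template used for Lemma \ref{L:profiles-kappa+1}: evaluate $f$ at the two endpoints $\sigma=0$ and $\sigma=1$, establish strict monotonicity in $\sigma$, and then invoke the intermediate value theorem to show that the range of $f$ misses $2\pi\mathbb{Z}$ entirely. The only structural change from the $\kappa=+1$ case is that every factor of $ih^{-1}$ now carries the opposite sign, which reverses the sign of $\partial_\sigma f$ and shifts the endpoint windows downward.

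First I would read off $f(0,h^{-1})$ from the $\kappa=-1$ branch of \eqref{E:1-142}. At $\sigma=0$ the two $\Im\log\Gamma$ terms have identical arguments and cancel, and $\tfrac12\sigma\pi=0$, leaving $f(0,h^{-1}) = -\tfrac{\pi}{2} - \Im\log(1-ie^{-\pi/h})$. Since $0<e^{-\pi/h}<1$, the point $1-ie^{-\pi/h}$ lies in the fourth quadrant with argument in $(-\tfrac{\pi}{4},0)$, so $-\Im\log(1-ie^{-\pi/h}) \in (0,\tfrac{\pi}{4})$ and hence $f(0,h^{-1}) \in (-\tfrac{\pi}{2},-\tfrac{\pi}{4}) \subset (-\tfrac{\pi}{2},0)$, as claimed. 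For $f(1,h^{-1})$ I would mirror the gamma-recurrence device \eqref{E:gprod}, now with $+\tfrac12 ih^{-1}$ in place of $-\tfrac12 ih^{-1}$: write $\log\Gamma(\tfrac14+\tfrac12\sigma+\tfrac12ih^{-1}) = \log(-\tfrac34+\tfrac12\sigma+\tfrac12ih^{-1}) + \log\Gamma(-\tfrac34+\tfrac12\sigma+\tfrac12ih^{-1})$, fixing the branch of the logarithm to be principal by testing at $(\sigma,h^{-1})=(\tfrac72,0)$ exactly as before. Evaluating at $\sigma=1$ (where $e^{i\sigma\pi}=-1$ turns the correction factor into $1+ie^{-\pi/h}$, and the gamma difference collapses to $-\Im\log(-\tfrac14+\tfrac12ih^{-1})$) gives $f(1,h^{-1}) = -\Im\log(1+ie^{-\pi/h}) - \Im\log(-\tfrac14+\tfrac12ih^{-1})$. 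Here $1+ie^{-\pi/h}$ has argument in $(0,\tfrac{\pi}{4})$, while $-\tfrac14+\tfrac12ih^{-1}$ sits in the second quadrant with principal argument in $(\tfrac{\pi}{2},\pi)$; summing the two windows yields $f(1,h^{-1}) \in (-\tfrac{5\pi}{4},-\tfrac{\pi}{2}) \subset (-\tfrac{3\pi}{2},-\tfrac{\pi}{2})$, the desired bound.

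The monotonicity step is essentially identical to the $\kappa=+1$ computation and is where I would be most careful about signs. From \eqref{E:f-1} with $\kappa=-1$ one gets $\partial_\sigma f = \tfrac12\Im(\psi(z)-\psi(z+\tfrac12))$ with $z = \tfrac14-\tfrac12\sigma-\tfrac12ih^{-1}$, so now $\Im z = y = -\tfrac12h^{-1}<0$. Feeding $x=\tfrac14-\tfrac12\sigma$, $y=-\tfrac12h^{-1}$ into the same term-by-term series formula for $\Im(\psi(z)-\psi(z+\tfrac12))$ produces summands proportional to $y(2x+\tfrac12+2k) = -\tfrac12h^{-1}(1-\sigma+2k)<0$ for every $k\ge 0$ when $0<\sigma<1$, so $\partial_\sigma f<0$ throughout.

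Finally, strict monotonic decrease together with $f(0,\cdot)<0$ and $f(1,\cdot)>-\tfrac{3\pi}{2}>-2\pi$ confines the entire range of $f$ over $0<\sigma<1$ to the open interval $(-\tfrac{3\pi}{2},0)$, which contains no integer multiple of $2\pi$; hence $f(\sigma,h^{-1})\in 2\pi\mathbb{Z}$ has no solution, which is the claim. The main obstacle, as in the preceding lemma, is pinning down the correct branch of $\log(-\tfrac14+\tfrac12ih^{-1})$ produced by the gamma recurrence, since an error of $2\pi$ there would shift the $\sigma=1$ window and could spuriously admit a crossing; the reference-point check at $(\tfrac72,0)$, carried through the simply connected domain excluding $(-\infty,\tfrac32)\times\{0\}$, is what rules this out.
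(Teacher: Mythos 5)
Your proposal is correct and is precisely the argument the paper intends: the paper omits this proof as ``analogous to Lemma \ref{L:profiles-kappa+1},'' and you carry out exactly that analogy --- endpoint evaluation of the $\kappa=-1$ branch of \eqref{E:1-142} (with the recurrence \eqref{E:gprod} and the reference-point branch check at $(\sigma,h^{-1})=(\tfrac72,0)$ giving the principal value of $\log(-\tfrac14+\tfrac12 ih^{-1})$ in the second quadrant), the digamma series from \eqref{E:partial-sigma-f} with $y=-\tfrac12 h^{-1}<0$ flipping the sign so $\partial_\sigma f<0$, and the intermediate value theorem confining the range to $(-\tfrac{3\pi}{2},0)$, which misses $2\pi\mathbb{Z}$. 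Your sharper endpoint windows $(-\tfrac{\pi}{2},-\tfrac{\pi}{4})$ and $(-\tfrac{5\pi}{4},-\tfrac{\pi}{2})$ are consistent with (and imply) the stated bounds, so nothing is missing.
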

\begin{proof}
The proof is analogous to the proof of Lemma \ref{L:profiles-kappa+1} and will be omitted.
\end{proof}

\section{Asymptotic calculation of $\sigma(h)$}
\label{S:profiles-asymptotic}

In this section we prove Theorem \ref{T:profiles-asymptotic}, starting with the case $0<h\ll 1$.  We use
\begin{align}
\notag
\log( 1+ i e^{-i\sigma \pi} e^{-\pi/h}) &= i e^{-i\sigma \pi} e^{-\pi/h} + O(e^{-2\pi/h}) \\
\label{E:exp-10}
&= i e^{-\pi/h} + O(e^{-2\pi/h}) + O(\sigma e^{-\pi/h})
\end{align}
and we also use the expansion
\begin{lemma}
\label{L:gamma-difference}
$$\log \Gamma( z+ \sigma) - \log \Gamma(z) = \sigma \log z  - \tfrac12 \sigma (1-\sigma) z^{-1} +  O( \sigma |z|^{-2})$$
\end{lemma}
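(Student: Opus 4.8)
The plan is to differentiate in the shift parameter and integrate, which is precisely what produces the overall factor of $\sigma$ in the remainder. Writing $\psi = (\log\Gamma)'$ for the digamma function, the fundamental theorem of calculus along the segment from $z$ to $z+\sigma$ gives
$$\log\Gamma(z+\sigma) - \log\Gamma(z) = \int_0^\sigma \psi(z+t)\,dt,$$
which is legitimate because in the intended application $z$ and $z+\sigma$ (with $0<\sigma<1$) both have real part bounded below by a positive constant and large negative imaginary part $-\tfrac12 h^{-1}$, so the whole segment stays inside the simply connected domain on which the branch of $\log\Gamma$ fixed earlier in this section is analytic and single-valued.

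The key input is the standard digamma asymptotic $\psi(w) = \log w - \tfrac12 w^{-1} + O(|w|^{-2})$, valid uniformly as $|w|\to\infty$ in any sector $|\arg w|\le \pi-\delta$. Since along the segment $\Re z$ is bounded while $|\Im z|=\tfrac12 h^{-1}\to\infty$, the points $w=z+t$ with $0\le t\le\sigma$ all lie in such a sector (with $\arg w \to -\tfrac{\pi}{2}$) and satisfy $|w|\asymp|z|$ uniformly in $t$, as $\sigma$ is bounded. Hence the error term above is $O(|z|^{-2})$ uniformly in $t\in[0,\sigma]$.

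Next I would Taylor expand the two explicit pieces about $w=z$, namely $\log(z+t) = \log z + tz^{-1} + O(|z|^{-2})$ and $\tfrac12(z+t)^{-1} = \tfrac12 z^{-1} + O(|z|^{-2})$, both uniformly for $t\in[0,\sigma]$. Combining these gives
$$\psi(z+t) = \log z - \tfrac12 z^{-1} + t z^{-1} + O(|z|^{-2})$$
uniformly in $t$, and integrating from $0$ to $\sigma$ yields
$$\int_0^\sigma \psi(z+t)\,dt = \sigma\log z - \tfrac12\sigma z^{-1} + \tfrac12\sigma^2 z^{-1} + O(\sigma|z|^{-2}) = \sigma\log z - \tfrac12\sigma(1-\sigma)z^{-1} + O(\sigma|z|^{-2}),$$
which is exactly the claimed expansion; the factor $\sigma$ in the remainder appears automatically from integrating a uniform $O(|z|^{-2})$ bound over an interval of length $\sigma$.

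The only point requiring genuine care — and the step I expect to be the main obstacle to state cleanly — is the uniformity of the digamma asymptotic across the integration range, together with the verification that $w=z+t$ remains in a fixed sector with $|w|\asymp|z|$. One could instead apply the full Stirling expansion to $\log\Gamma(z+\sigma)$ and $\log\Gamma(z)$ separately and subtract, but then recovering the $\sigma$-factor in the remainder (rather than a bare $O(|z|^{-2})$) forces one to track the $\sigma$-dependence of every Stirling term through the subtraction, so the integral-of-digamma route is the cleaner path.
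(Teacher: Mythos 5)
Your proof is correct, but it takes a genuinely different route from the paper. You integrate the digamma asymptotic $\psi(w)=\log w-\tfrac12 w^{-1}+O(|w|^{-2})$ along the segment from $z$ to $z+\sigma$, so the factor $\sigma$ in the remainder falls out automatically from the length of the integration interval. The paper instead applies the exact Binet formula $\log\Gamma(z)=(z-\tfrac12)\log z-z+\tfrac12\ln(2\pi)+2\int_0^{\infty}\frac{\arctan(tz^{-1})}{e^{2\pi t}-1}\,dt$ to both $\log\Gamma(z+\sigma)$ and $\log\Gamma(z)$ and subtracts: the difference of the two Binet integrals is bounded by $O(\sigma|z|^{-2})$ in a single stroke via the mean value theorem (using $|(\arctan)'|\le 1$), and the elementary terms are expanded by writing $\log(z+\sigma)=\log z+\log(1+\sigma z^{-1})$, which is where the $-\tfrac12\sigma(1-\sigma)z^{-1}$ term emerges. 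So your closing worry --- that the ``subtract two expansions'' route forces one to track the $\sigma$-dependence of every Stirling term --- is not borne out: because Binet's formula is exact rather than an asymptotic series, the entire remainder sits in one integral whose $\sigma$-dependence the mean value theorem captures at once. The two arguments are of comparable length; yours has the advantage of producing the $\sigma$-factor structurally, the paper's avoids any appeal to uniformity of the digamma expansion in sectors (it only needs the elementary arctan bound). One small imprecision in your write-up: in the application $z=\tfrac14-\tfrac12\sigma-\tfrac12 ih^{-1}$ with $0<\sigma<1$, so $\Re z$ need not be positive; the justification that the segment stays in the domain of the fixed branch of $\log\Gamma$ should rest solely on the nonvanishing imaginary part $\Im(z+t)=-\tfrac12 h^{-1}$, which you do also invoke, so the argument stands.
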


We will prove this lemma below.   Let  $z = \frac14 - \frac12\sigma -\frac12 i h^{-1} = -\frac12 i h^{-1}(1+\frac12 i h - i \sigma h)$.  Then $z^{-1} = 2ih+ O(h^2)$ and
\begin{align*}
\sigma \log z &= \sigma \log [(-i) (\tfrac12 h^{-1})(1+\tfrac12 i h - i \sigma h)] \\
& = \sigma ( - \tfrac12\pi i + \log (\tfrac12 h^{-1}) + \log( 1+\tfrac12 i h - i\sigma h)) \\
& = -\tfrac12\pi \sigma i + \sigma \log(\tfrac12 h^{-1}) + \tfrac12i\sigma h - i \sigma^2 h + O(\sigma h^2)
\end{align*}
Also,
$$- \tfrac12 \sigma(1-\sigma) z^{-1} = -i\sigma(1-\sigma) h + O(\sigma h^2)$$
Plugging into Lemma \ref{L:gamma-difference}, we obtain
\begin{align}
\notag \indentalign \log \Gamma(\tfrac14 - \tfrac12\sigma -\tfrac12i h^{-1}) - \log \Gamma(\tfrac14 + \tfrac12\sigma - \tfrac12 i h^{-1}) \\
\label{E:exp-11}
&= - \sigma \log (\tfrac12 h^{-1}) + \tfrac12 i\pi \sigma + \tfrac12 i \sigma h  + O(\sigma h^2)
\end{align}
Substituting the asymptotic expansions \eqref{E:exp-10}, \eqref{E:exp-11} (valid for $0<\sigma<1$) into \eqref{E:1-142}, we obtain
$$
f(\sigma, h^{-1}) =  \tfrac12 \sigma h - e^{-\pi/h} + O(e^{-2\pi/h}) + O(\sigma h^2)
$$
Note that the ``big $O$'' notation means that there exists $C>0$ such that for every $0<\sigma<1$, \begin{equation}
\label{E:log-B-expand-2}
\tfrac12\sigma h (1-Ch) - e^{-\pi/h}(1+Ce^{-\pi/h}) \leq f(\sigma) \leq \tfrac12 \sigma h(1+Ch) - e^{-\pi/h}(1-Ce^{-\pi/h})
\end{equation}
If $0<\sigma \leq 2h^{-1}e^{-\pi/h}(1-Ch)$, then we have from \eqref{E:log-B-expand-2} that
\begin{align*}
f(\sigma) &\leq \tfrac12 \sigma h ( 1+Ch) - e^{-\pi/h}(1- Ce^{-\pi/h})\\
&\leq  (1-Ch)(1+Ch) e^{-\pi/h} - e^{-\pi/h}(1-Ce^{-\pi/h}) \\
&= (-C^2h^2 + Ce^{-\pi/h}) e^{-\pi/h} \\
&<0
\end{align*}
If $2h^{-1}e^{-\pi/h}(1+2Ch) \leq \sigma <1$, then we have from \eqref{E:log-B-expand-2} that
\begin{align*}
f(\sigma) &\geq \tfrac12 \sigma h(1-Ch) - e^{-\pi/h}(1+Ce^{-\pi/h}) \\
&\geq (1+2Ch)(1-Ch) e^{-\pi/h} - e^{-\pi/h}(1+Ce^{-\pi/h}) \\
& \geq (1+Ch - 2C^2h^2-Ce^{-\pi/h}) e^{-\pi/h} \\
&>0
\end{align*}
Hence the unique solution to $f(\sigma, h^{-1})=0$ lies in the interval
$$2h^{-1}e^{-\pi/h}(1-Ch) e^{-\pi/h} \leq \sigma \leq 2h^{-1}e^{-\pi/h}(1+2Ch)$$

\begin{proof}[Proof of Lemma \ref{L:gamma-difference}]
Recall the Binet's log Gamma formula
\begin{equation}
\label{E:Binet}
\log \Gamma(z) = (z-\tfrac12)\log z - z + \tfrac12 \ln(2\pi) + 2 \int_0^{+\infty} \frac{\arctan(tz^{-1})}{e^{2\pi t}-1} \, dt
\end{equation}
By the mean-value theorem (using that $|(\arctan z)'| \leq 1$) we obtain
$$|\arctan(t(z+\sigma)^{-1}) - \arctan(tz^{-1})| \leq t\left| \frac{1}{z+\sigma} - \frac{1}{z} \right| \leq \frac{t\sigma}{|z(z+\sigma)|}$$
and hence
$$\left| \int_0^{+\infty} \frac{\arctan(t(z+\sigma)^{-1}) - \arctan(tz^{-1})}{e^{2\pi t}-1} \, dt \right| \leq 2\sigma |z|^{-2} \int_0^\infty \frac{t}{e^{2\pi t}-1} \, dt $$ 
it follows that
\begin{align}
\notag \indentalign \log \Gamma(z+\sigma) - \log \Gamma(z) \\
\notag &= (z+\sigma - \tfrac12) \log(z+\sigma) - (z-\tfrac12)\log z - \sigma + O(\sigma |z|^{-2}) \\
\notag &= (z-\tfrac12 + \sigma) \log[ z(1+\sigma z^{-1})] - (z-\tfrac12)\log z - \sigma + O(\sigma |z|^{-2}) \\
\label{E:log-exp-1} &= \sigma\log z + (z-\tfrac12 + \sigma) \log (1+\sigma z^{-1}) - \sigma + O (\sigma |z|^{-2})
\end{align}
Now also
\begin{align}
\notag (z-\tfrac12 + \sigma) \log (1+\sigma z^{-1}) &= (z-\tfrac12+\sigma)( \sigma z^{-1} - \tfrac12 \sigma^2 z^{-2} + O(\sigma^3z^{-3})) \\
\label{E:log-exp-2}
&= \sigma - \tfrac12 \sigma(1-\sigma) z^{-1}
\end{align}
Substituting \eqref{E:log-exp-2} into \eqref{E:log-exp-1}, we obtain the claimed expansion.
\end{proof}

Now we return to compute the amplitude of the outgoing solution $\varphi$ at $x=0$.  Recall
$$\varphi(x) = \alpha v(h^{1/2}|x|)\,, \qquad \alpha = \frac{ (2h^{1/2} A(\lambda))^{1/(p-1)}}{v(0)} $$
with
$$A(\lambda) = -\frac{v'(0)}{v(0)}=\frac{ e^{-i\pi/4} \sqrt{2} \Gamma( \frac34-\frac12\sigma +\frac12ih^{-1})}{\Gamma(\frac14-\frac12\sigma +\frac12ih^{-1})}$$
 We have chosen $\sigma = 2e^{-\pi/h}h^{-1}(1+O(h))$ precisely so that $A>0$, and thus $A^{1/(p-1)}$ is well-defined. 
From \url{http://dlmf.nist.gov/5.11#E14}, we have
$$\frac{\Gamma(z+a)}{\Gamma(z+b)} = z^{a-b}(1+O(z^{-1}))$$
Taking $z = \frac12 i h^{-1}$, $a=\frac34-\frac12\sigma$, $b=\frac14-\frac12\sigma$, we obtain
$$A = h^{-1/2}(1+O(h))$$
hence
\begin{equation}
\label{E:alpha1}
\alpha = \frac{ 2^{1/(p-1)}(1+O(h))}{v(0)}
\end{equation} 
Hence
$$\varphi(0) = \alpha v(0) = 2^{1/(p-1)} (1+O(h))$$
To compute that asymptotic behavior of $\varphi(x)$ as $|x|\to \infty$, we need to compute $\alpha$ and hence $v(0)$.  In \S\ref{S:Weber}, we show that
$$v(0) = \frac{\sqrt{\pi} 2^{\frac12 i \lambda - \frac14}}{\Gamma(\frac34-\frac12i\lambda)}$$
With $\lambda = -ih^{-1} -i\sigma$, this becomes
$$v(0) = \frac{\sqrt{\pi} e^{-\frac12 i h^{-1} \log 2} 2^{-\frac14 +\frac12\sigma}}{\Gamma(\frac34-\frac12\sigma+\frac12 i h^{-1})}$$
Using asymptotic expansion in \eqref{E:Binet}, we obtain that as $h \to 0$, $\sigma \approx 2h^{-1}e^{-\pi/h}$, 
$$ \Gamma(\tfrac12ih^{-1}+\tfrac34-\tfrac12\sigma) \approx (2\pi)^{1/2}  2^{-1/4}e^{\pi i/8} h^{-1/4} e^{-\frac{\pi}{4h}} e^{i\frac12h^{-1}\log (\frac12 h^{-1})} e^{-i\frac12h^{-1}}$$
and hence
$$v(0) \approx 2^{-1/2} e^{-\pi i/8} h^{1/4} e^{\pi/4h} e^{-\frac12i h^{-1}\log h^{-1}} e^{\frac12 i h^{-1}}$$
From \eqref{E:alpha1} it follows that
\begin{equation}
\label{E:alpha-asymp}
\alpha \approx 2^{1/(p-1)}2^{1/2} e^{\pi i/8} h^{-1/4} e^{-\pi/4h} e^{\frac12i h^{-1}\log h^{-1}} e^{-\frac12 i h^{-1}}
\end{equation}

Finally, we compute $\sigma(h)$ in the case $h\gg 1$ (i.e. $0<h^{-1}\ll 1$) and obtain the formula presented in the second part of Theorem \ref{T:profiles-asymptotic}.  Taking $z=\frac14-\frac12\sigma+\frac12ih^{-1}$, we obtain from \eqref{E:A} that
\begin{equation}
\label{E:A2}
A(\lambda) = \frac{e^{-i\pi/4} \sqrt{2}\Gamma(\frac12+z)}{\Gamma(z)}
\end{equation}
In the case $h^{-1}\to 0$, we must have $\sigma \to \frac12$, for otherwise if $\sigma\to \sigma_0\neq \frac12$, then $\Gamma(\frac12+z)\to \Gamma(\frac34-\frac12\sigma_0)$ and $\Gamma(z)\to \Gamma(\frac14-\frac12\sigma_0)$, both of which are real (and finite).  Hence the right side of \eqref{E:A2} cannot converge to a real and positive value.    Given that $\sigma\to \frac12$ as $h^{-1}\to 0$, we have that $z\to 0$.  Hence we reexpress the denominator of \eqref{E:A2} as $z^{-1}\Gamma(1+z)$ to obtain
\begin{equation}
\label{E:A3}
A(\lambda) = \frac{e^{-i\pi/4} \sqrt{2} z\Gamma(\frac12+z)}{\Gamma(1+z)} 
\end{equation}
Since $\Gamma(\frac12+z)\to \Gamma(\frac12)=\sqrt{\pi}$ and $\Gamma(1+z)\to \Gamma(1)=1$, we conclude that $\arg(z) \to \frac{\pi}{4}$, i.e. $\Re(z) \approx \Im(z)$ as $h^{-1}\to 0$.  This implies $\frac14-\frac12\sigma \approx \frac12 h^{-1}$, or $\sigma \approx \frac12-h^{-1}$ as $h^{-1}\to 0$.

\section{Form of outgoing profiles as $h\to 0$}
\label{S:WKB}

In this section, we prove Theorem \ref{T:WKB}.  This follows from the calculation of $\alpha$ in \eqref{E:alpha-asymp} and the following lemma.

\begin{figure}
\includegraphics[scale=0.5]{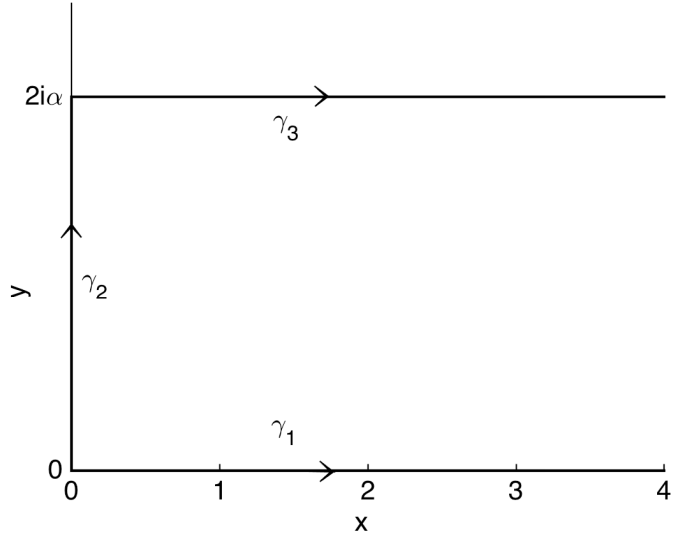}
\caption{\label{F:WKB-3}Depiction of the curves $\gamma_1$, from $0$ to $+\infty$ along the positive real axis, $\gamma_2$, from $0$ to $2 i\alpha^2$ along the positive imaginary axis, and $\gamma_3$, the curve in the first quadrant following the curve $z=r(\theta)e^{i\theta}$, where $r(\theta) = \frac12 \csc \theta$.}
\end{figure}

\begin{figure}
\includegraphics[scale=0.7]{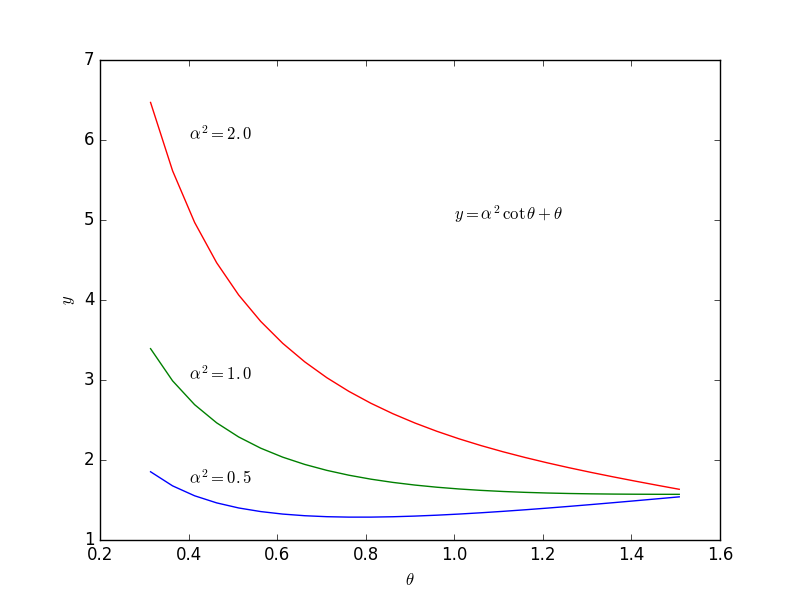}
\caption{\label{F:WKB-2}Graphs of $f_\alpha(\theta) = \alpha^2\cot \theta + \theta$ on $0<\theta\leq \frac{\pi}{2}$ for $\alpha^2=2.0$, $1.0$, and $0.5$.  For each $\alpha$, we have $f(\frac{\pi}{2})=\frac{\pi}{2}$ and $f(\theta) \sim \alpha^2 \theta^{-1}$ as $\theta \searrow 0$.  For $\alpha\geq 1$, $f(\theta)$ is decreasing on the whole interval, but for $0<\alpha<1$, $f(\theta)$ achieves a minimum in the middle at $\theta_0 = \arcsin \alpha$ with value $f(\theta_0) = \alpha\sqrt{1-\alpha^2} + \arcsin\alpha$.}
\end{figure}

\begin{figure}
\includegraphics[scale=0.7]{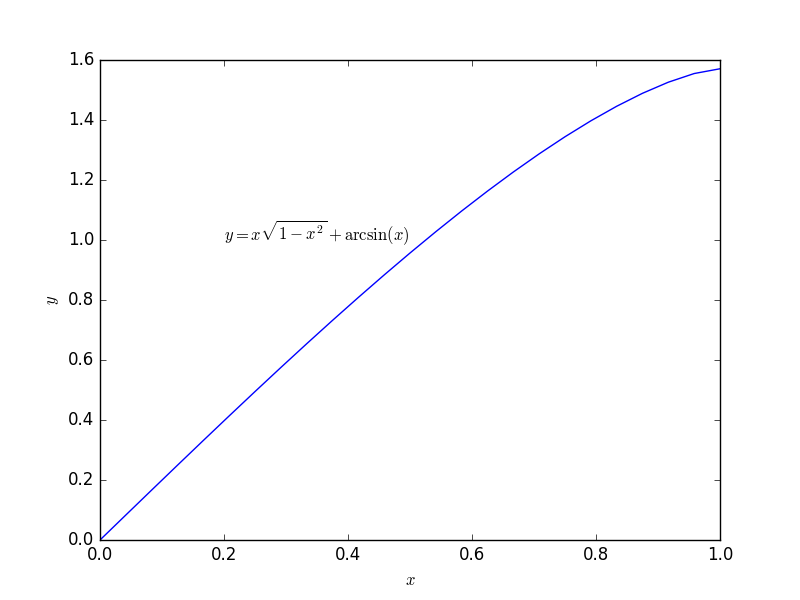}
\caption{\label{F:WKB-1}Graph of $S(x) = x\sqrt{1-x^2}+\arcsin x$.}
\end{figure}

\begin{lemma}[$x\to+\infty$ asymptotics of $v(x)$]
\label{L:asymp}
For $\lambda = -h^{-1}-i\sigma$, $0<h \ll 1$, and $\sigma \approx 2h^{-1}e^{-\pi/h}$, defining $\alpha \defeq \frac12 h^{1/2}x$, as $h\to 0$, we have the expansion
$$ v(x) \approx 
\begin{cases}
 (1-\alpha)^{-1/4} e^{-h^{-1}[\arcsin\alpha + \alpha(1-\alpha^2)^{1/2}]} & \text{for }x\ll 2h^{-1/2} \\
 e^{\frac14 ix^2} x^{-ih^{-1}+\sigma-\frac12} & \text{for }x\gg 2h^{-1/2}
\end{cases}
$$
\end{lemma}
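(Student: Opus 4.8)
The plan is to start from the integral representation \eqref{E:2.5}, which with $\lambda = -h^{-1}-i\sigma$ reads
$$v(x) = \frac{e^{\frac14 ix^2}}{\Gamma(\frac12 - \sigma + ih^{-1})}\int_0^\infty e^{-\frac12 t^2}\, e^{-e^{-i\pi/4}tx}\, t^{ih^{-1}-\sigma-\frac12}\, dt,$$
and to evaluate the integral by the saddle point / steepest descent method with large parameter $h^{-1}$. First I would rescale $t = h^{-1/2}s$ and set $\alpha = \frac12 h^{1/2}x$, which turns the integrand into $e^{h^{-1}\Phi(s)}$ times algebraic factors, where
$$\Phi(s) = -\tfrac12 s^2 - 2\alpha e^{-i\pi/4}s + i\log s.$$
The $-\frac12\log h$ produced by the power $t^{ih^{-1}}$, together with the Jacobian $h^{-1/2}$ and the remaining $t^{-\sigma-\frac12}$, contribute only $x$-independent prefactors and a mild algebraic factor $s^{-\sigma-\frac12}$, which I will carry along and combine at the end.

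Next I would locate the saddle points from $\Phi'(s)=0$, i.e.\ $s^2 + 2\alpha e^{-i\pi/4}s - i = 0$, giving $s_\pm = -\alpha e^{-i\pi/4} \pm e^{i\pi/4}(1-\alpha^2)^{1/2}$. For $\alpha<1$ a short computation shows $|s_+|=1$ and in fact $s_+ = e^{i(\arcsin\alpha + \pi/4)}$, so that
$$\operatorname{Re}\Phi(s_+) = -[\arcsin\alpha + \alpha(1-\alpha^2)^{1/2}] - \tfrac{\pi}{4} = -S(\alpha) - \tfrac{\pi}{4},$$
with $S$ as in Figure \ref{F:WKB-1}, while $\Phi''(s_+) = -1 - e^{-2i\arcsin\alpha}$ has modulus $2(1-\alpha^2)^{1/2}$. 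The standard Laplace factor $(2\pi/(h^{-1}|\Phi''(s_+)|))^{1/2}$ then produces exactly the $(1-\alpha^2)^{-1/4}$ prefactor, and the imaginary part $\operatorname{Im}\Phi(s_+) = -\frac12 - \alpha^2$ yields a phase $e^{-\frac12 ih^{-1}}e^{-\frac14 ix^2}$ that cancels the explicit $e^{\frac14 ix^2}$ and leaves a real amplitude, consistent with the first line of the lemma. The remaining $h$-dependent constants, together with the asymptotics of $1/\Gamma(\frac12-\sigma+ih^{-1})$ obtained from Binet's formula \eqref{E:Binet} (cf.\ the $v(0)$ computation in \S\ref{S:profiles-asymptotic}), fix the overall normalization.

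The geometric heart of the argument is the contour deformation depicted in Figure \ref{F:WKB-3}: the original path of integration (the positive real axis, $\gamma_1$) is pushed to a constant-phase contour through $s_+$, realized via the segment $\gamma_2$ and the curve $\gamma_3$ on which the phase reduces to $f_\alpha(\theta) = \alpha^2\cot\theta + \theta$ (Figure \ref{F:WKB-2}), whose unique minimum at $\theta_0 = \arcsin\alpha$ has value $f_\alpha(\theta_0) = S(\alpha)$. I would justify this deformation by checking that the integrand is analytic in the first quadrant away from the branch point at $s=0$, that the contribution of the far arc vanishes, and that $\gamma_2$ contributes only at lower order; restricting $\theta$ to a neighborhood of $\theta_0$ and applying Laplace's method along $\gamma_3$ then gives the first case. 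For $x \gg 2h^{-1/2}$, i.e.\ $\alpha>1$, the quantity $(1-\alpha^2)^{1/2}$ becomes imaginary, the two saddles separate along a different direction, and the same steepest-descent analysis produces an \emph{oscillatory} leading term which I would match to the already-derived expansion \eqref{E:2.10}; this is precisely the assertion that the leading term of \eqref{E:2.10} persists down to $x \gg 2h^{-1/2}$, giving the second case $e^{\frac14 ix^2}x^{-ih^{-1}+\sigma-\frac12}$.

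The main obstacle I anticipate is twofold. First, because $\Phi$ contains the non-polynomial term $i\log s$ (equivalently, the integrand carries a branch cut from the fractional power $t^{ih^{-1}-\sigma-\frac12}$), the deformation onto the steepest-descent contour must be carried out carefully near $s=0$, and one must verify that the relevant saddle is $s_+$ rather than $s_-$ after deformation from the positive real axis. Second, the two regimes are separated by the turning point $\alpha=1$, where $s_+$ and $s_-$ coalesce and Laplace's method degenerates; establishing the two stated expansions \emph{uniformly} for $\alpha$ bounded away from $1$, with errors uniform in $h$, is the delicate point, and the reason the lemma is phrased for $x \ll 2h^{-1/2}$ and $x \gg 2h^{-1/2}$ rather than at the transition itself.
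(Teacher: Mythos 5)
Your proposal is correct and follows essentially the same route as the paper's proof: the same integral representation \eqref{E:2.5}, the same rescaling (your $t = h^{-1/2}s$ versus the paper's $t \mapsto h^{-1}x^{-1}t$ followed by the $e^{i\pi/4}$ rotation, related by a linear change of variables), and the same deformation onto the contours of Figure \ref{F:WKB-3}, with your saddle data $\Re\Phi(s_+) = -S(\alpha)-\frac{\pi}{4}$, $|\Phi''(s_+)| = 2(1-\alpha^2)^{1/2}$, $\Im\Phi(s_+) = -\frac12-\alpha^2$ exactly reproducing the paper's $f_\alpha(\theta_0)$, $f_\alpha''(\theta_0)$, $\nu_\alpha''(\theta_0)$ at $\theta_0 = \arcsin\alpha$. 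Your handling of the $\alpha>1$ regime through the saddle $s_-$ is the paper's stationary point $s_0$ on the imaginary-axis segment (matched to \eqref{E:2.10}), and the difficulties you flag—the branch point at the origin and the degeneration at the turning point $\alpha=1$—are precisely why the lemma is stated for $x \ll 2h^{-1/2}$ and $x \gg 2h^{-1/2}$ only.
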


\begin{proof}
Recall we restrict to $x>0$.  As $x=2h^{-1/2}$ is the classical turning point, it is convenient to use the parameter $\alpha \defeq \frac12 h^{1/2}x$, which has the property that $\alpha=1$ exactly when $x=2h^{-1/2}$.

By definition, we have
\begin{equation}
\label{EE:2.5-2}
v(x) = \frac{e^{\frac14 ix^2}}{\Gamma(ih^{-1}-\sigma+\frac12)} \int_0^\infty e^{-\frac12 t^2} e^{- \big( \frac{1-i}{\sqrt{2}}\big) tx} \, t^{ih^{-1}-\sigma-\frac12} \, dt 
\end{equation}
for $\sigma<\frac12$.    By changing variables $t\mapsto h^{-1} x^{-1}t$, we obtain
\begin{equation}
\label{EE:2.5-3}
v(x) = e^{\frac14 ix^2}  ( hx)^{-ih^{-1}+\sigma-\frac12} \frac{1 }{\Gamma(ih^{-1}-\sigma+\frac12)} \int_0^\infty e^{-\frac12 h^{-2} x^{-2} t^2} e^{- \big( \frac{1-i}{\sqrt{2}}\big) h^{-1} t } \, t^{ih^{-1}-\sigma-\frac12} \, dt 
\end{equation}
By rotating contour forward by $e^{\pi i/4}$, we obtain
\begin{equation}
\label{EE:2.5-4}
v(x) = e^{\frac14 ix^2}  (hx)^{-ih^{-1}+\sigma-\frac12} \frac{ e^{-\pi/(4h)}  e^{-i\sigma \pi/4} e^{\pi i/8}  }{\Gamma(ih^{-1}-\sigma+\frac12)} g(\alpha, h^{-1})
\end{equation}
where
$$g(\alpha, h^{-1}) \defeq \int_0^\infty e^{-h^{-1}(\frac18 i \alpha^{-2}t^2 + t - i \log t)}t^{-\sigma-\frac12}  \, dt $$
Note that the application of Cauchy's theorem required to deduce \eqref{EE:2.5-4} from \eqref{EE:2.5-3} is straightforward since the functions in the exponential have negative real part.    Thus one can use the standard wedge contour, and we will not further elaborate on this calculation.  Taking
$$p_\alpha(z) = -\tfrac18 i \alpha^{-2}z^2 - z +i \log z$$
then
\begin{equation}
\label{EE:2.5-5}
g(\alpha, h^{-1}) = \int_{\gamma_1} e^{h^{-1} p_\alpha(z)} z^{-\sigma- \frac12} \, dz
\end{equation}
where $\gamma_1$ denotes the positive real axis, oriented from $0$ to $+\infty$.  We would like to rotate forward the contour $\gamma_1$ in \eqref{EE:2.5-5} from the positive real axis to the positive imaginary axis, although this requires moving through a region where $-\frac18 i \alpha^{-2} z^2$ has \emph{positive} real part.  Taking $z=re^{i\theta}$, we have
$$\Re p_\alpha(z) = \tfrac18 \alpha^{-2} r^2 \sin 2\theta - r\cos\theta - \theta$$
\begin{equation}
\label{E:Im-p}
\Im p_\alpha(z) = -\tfrac18 \alpha^{-2} r^2 \cos 2\theta - r\sin \theta + \log r
\end{equation}
Using the identity $\sin 2\theta = 2\sin \theta \cos \theta$ and completing the square, we have
$$\Re p_\alpha(z) = \tfrac14 \alpha^{-2} \sin \theta \cos \theta ( r-  2\alpha^2 \csc\theta)^2 - f_\alpha(\theta)$$
where
 $$f_\alpha(\theta) \defeq \alpha^2\cot \theta + \theta$$
This suggests to deform along the contour $r =2 \alpha^2 \csc\theta$, denoted $\gamma_2$, to link up with the segment $\gamma_3$ of the positive imaginary axis from $0$ to $+2i\alpha^2$, as in Fig. \ref{F:WKB-3}.    Cauchy's theorem implies 
$$g(\alpha, h^{-1}) = g_2(\alpha, h^{-1}) + g_3(\alpha, h^{-1})$$
where
$$g_j(\alpha, h^{-1}) = \int_{\gamma_j} e^{h^{-1} p_\alpha(z)} z^{-\sigma- \frac12} \, dz$$

First we shall examine $g_2(\alpha, h^{-1})$.  We parameterize the contour $\gamma_2$ as $z = r(\theta)e^{i\theta}$, with $r(\theta) = 2\alpha^2 \csc\theta$ from $\theta=\frac{\pi}{2}$ to $\theta = 0$, and have
$$dz = (r'(\theta)+ir(\theta)) e^{i\theta} d\theta = 2\alpha^2 \csc^2\theta d\theta$$
Thus
$$g_2(\alpha, h^{-1}) = -(2\alpha^2)^{\frac12-\sigma} \int_0^{\pi/2} e^{-h^{-1} f_\alpha(\theta)} e^{i h^{-1} \nu_\alpha(\theta)} e^{i (\sigma+\frac12)\theta} (\sin \theta)^{\sigma-\frac32}  \,  d\theta $$
where
$$\nu_\alpha(\theta) \defeq \Im p_\alpha(r(\theta)e^{i\theta})$$
Fig. \ref{F:WKB-2} shows a plot of $f_\alpha(\theta)$.    For each $\alpha$, we have $f_\alpha(\frac{\pi}{2})=\frac{\pi}{2}$ and $f(\theta) \sim \alpha^2 \theta^{-1}$ as $\theta \searrow 0$.  For $\alpha\geq 1$, $f(\theta)$ is decreasing on the whole interval, but for $0<\alpha<1$, $f(\theta)$ achieves a minimum in the middle at $\theta_0 = \arcsin \alpha$ with value 
\begin{equation}
\label{E:SP1}
f_\alpha(\theta_0) = \alpha(1-\alpha^2)^{1/2} + \arcsin\alpha \,, \qquad f_\alpha''(\theta_0) = 2\alpha^2 \frac{\cos\theta_0}{\sin^3\theta_0}
\end{equation}
Use of the double angle identity $\cos2\theta = 1-2\sin^2\theta$ and \eqref{E:Im-p} gives
\begin{equation}
\label{E:SP2}
\nu_\alpha(\theta) = -\tfrac12 \alpha^2 \csc^2\theta + \log \csc\theta  - \alpha^2 +\log(2\alpha^2)
\end{equation}
We also find that when $\alpha<1$, with $\theta_0=\arcsin\alpha$, that $\nu_\alpha'(\theta_0)=0$ and
\begin{equation}
\label{E:SP3}
\nu_\alpha(\theta_0) = -\tfrac12 - \alpha^2 + \log(2\alpha) \,, \qquad \nu_\alpha''(\theta_0) = -2\alpha^2 \frac{\cos^2\theta_0}{\sin^4\theta_0}
\end{equation}
Hence
$$f''_\alpha(\theta_0) -i \nu_\alpha''(\theta_0) = 2\alpha^2 \frac{\cos\theta_0}{\sin^4\theta_0}(\cos\theta_0 -i\sin \theta_0) = 2 (1-\alpha^2)^{1/2}\alpha^{-2} e^{i(\frac{\pi}{2}-\arcsin\alpha)}$$
We now invoke stationary phase/Laplace method to obtain
\begin{equation}
\label{E:SP4}
g_2(\alpha, h^{-1}) \approx -(2\alpha^2)^{\frac32-\sigma} e^{h^{-1}(-f_\alpha(\theta_0)+i\nu_\alpha(\theta_0))} e^{i(\sigma+\frac12)\theta_0} (\sin \theta_0)^{\sigma-\frac32} I(\alpha, h^{-1})
\end{equation}
where
\begin{equation}
\label{E:SP5}
\begin{aligned}
I(\alpha,h^{-1}) &= \int e^{-\frac12h^{-1}(f_\alpha''(\theta_0)-i\nu_\alpha''(\theta_0))(\theta-\theta_0)^2} \, d\theta \\
&= (2\pi)^{1/2} [ h^{-1}(f_\alpha''(\theta_0)-i\nu_\alpha(\theta_0))]^{-1/2} 
\end{aligned}
\end{equation}
Substituting $\theta_0=\arcsin\theta_0$ and \eqref{E:SP1}, \eqref{E:SP2}, \eqref{E:SP3} into \eqref{E:SP4}, \eqref{E:SP5}, we obtain
\begin{align*}
g_2(\alpha, h^{-1}) & \approx -2^{3/2} \pi^{1/2} e^{i\pi/4}  h^{1/2} e^{ih^{-1}(-\frac12-\alpha^2+\log(2\alpha))}e^{i\sigma \arcsin\alpha}  \\
& \qquad \cdots \times (1-\alpha^2)^{-1/4} \alpha^{\frac12-\sigma} e^{-h^{-1}[\alpha(1-\alpha^2)^{1/2}+\arcsin\alpha]}
\end{align*}
which is the dominant contribution for $\alpha<1$.  

Next, consider $\gamma_3$.  We parameterize it as $z=is$, where $s$ goes from $s=0$ to $s=2\alpha^2$.  Then
$$p_\alpha(is) = i\varphi_\alpha(s) - \tfrac12\pi$$
where
$$\varphi_\alpha(s) \defeq \tfrac18 \alpha^{-2}s^2 - s+ \log s$$
Then
$$g_3(\alpha, h^{-1}) = e^{-\pi/(2h)} e^{-i\pi\sigma/2}e^{\pi i/4}  \int_0^{2\alpha^2} e^{ih^{-1}\varphi_\alpha(s)} s^{-\sigma-\frac12} \, ds$$
Since $\varphi_\alpha'(s) = \frac14 \alpha^{-2} s - 1 + s^{-1}$, we find that when $\alpha \geq 1$, there is a stationary point in the interval $0< s \leq 2\alpha^2$ at
$$s_0 = 2\alpha^2( 1- \sqrt{1-\alpha^{-2}}) = \frac{2}{1+\sqrt{1-\alpha^{-2}}}$$
and
$$\varphi''(s_0) = -\tfrac12\sqrt{1-\alpha^{-2}}(1+ \sqrt{1-\alpha^{-2}})$$
For $\alpha \gg 1$, this takes the asymptotic form 
$$s_0 = 1+ O(\alpha^{-2}) \,, \qquad \varphi(s_0) = -1 + O(\alpha^{-2})\,, \qquad \varphi''(s_0) = -1 + O(\alpha^{-2})$$    
By stationary phase
\begin{align*}
\int_0^{2\alpha^2} e^{ih^{-1}\varphi_\alpha(s)} s^{-\sigma-\frac12} \, ds &\approx e^{ih^{-1}\varphi_\alpha(s_0)} s_0^{-\sigma-\frac12} \int e^{\frac12 i h^{-1}\varphi_\alpha''(s_0)(s-s_0)^2} \, ds \\
&= e^{ih^{-1}\varphi_\alpha(s_0)} s_0^{-\sigma-\frac12} h^{-1/2} (-\varphi_\alpha''(s_0))^{1/2}  (2\pi)^{1/2} e^{-i\pi/4}
\end{align*}
and hence
$$g_3(\alpha, h^{-1}) \approx  (2\pi)^{1/2} e^{-\pi/(2h)} e^{ih^{-1}\varphi_\alpha(s_0)} s_0^{-\sigma-\frac12} h^{-1/2} (-\varphi_\alpha''(s_0))^{1/2}  $$

In the case where $\alpha \gg h^{-1/2}$, we can use the asymptotic forms to simplify
$$g_3(\alpha, h^{-1}) \approx (2\pi)^{1/2}  e^{-\pi/(2h)} e^{-ih^{-1}} h^{-1/2}  $$
\end{proof}

\def\cprime{$'$} \def\cprime{$'$}
\providecommand{\bysame}{\leavevmode\hbox to3em{\hrulefill}\thinspace}
\providecommand{\MR}{\relax\ifhmode\unskip\space\fi MR }
\providecommand{\MRhref}[2]{%
  \href{http://www.ams.org/mathscinet-getitem?mr=#1}{#2}
}
\providecommand{\href}[2]{#2}

\end{document}